 \documentclass[12pt,a4wide]{article}
 \usepackage[utf8]{inputenc}

\usepackage{graphicx}

\usepackage{amsfonts}
\usepackage{latexsym}
\usepackage{ amssymb, bm, amsmath, amsthm }
\usepackage{float}
\usepackage{multirow}
\usepackage{a4wide}
\usepackage{enumerate}
\usepackage{url}
\usepackage[dvipsnames]{xcolor}
\usepackage{todonotes}
\usepackage[noadjust]{cite}
\usepackage{soul}

\usepackage{array}
\usepackage{makecell}
\usepackage{subcaption}
\setcellgapes{5pt}

\definecolor{lightblue}{rgb}{0.68,0.85,0.9}

\usepackage{tikz}
\usetikzlibrary{shapes}
\usetikzlibrary{calc}

\pgfdeclarelayer{edgelayer}
\pgfdeclarelayer{nodelayer}
\pgfsetlayers{edgelayer,nodelayer,main}

\tikzstyle{Empty}=[fill=white, draw=black, shape=circle]
\tikzstyle{Vertex}=[fill=white, draw=black, shape=circle, inner sep=1pt, outer sep=0pt]
\tikzstyle{Gray}=[fill={rgb,255: red,128; green,128; blue,128}, draw=black, shape=circle]
\tikzstyle{Red}=[fill=Gray!60, draw=black, shape=circle]
\tikzstyle{Square}=[fill=white, draw=black, shape=rectangle]
\tikzstyle{Blue}=[fill=cyan, draw=black, shape=circle]
\tikzstyle{BlueSquare}=[fill=cyan, draw=black, shape=rectangle]
\tikzstyle{RedSquare}=[fill=red, draw=black, shape=rectangle]
\tikzstyle{Green}=[fill=green, draw=black, shape=circle]
\tikzstyle{GreenSquare}=[fill=green, draw=black, shape=rectangle]
\tikzstyle{GraySquare}=[fill={rgb,255: red,128; green,128; blue,128}, draw=black, shape=rectangle]

\tikzstyle{Normal}=[-, draw=black, fill=none]
\tikzstyle{Arrow}=[->]
\tikzstyle{Arrow2}=[<-]

\newtheorem{theorem}{Theorem}[section]

\newtheorem{lemma}[theorem]{Lemma}
\newtheorem{definition}[theorem]{Definition}

\newtheorem{remark}[theorem]{Remark}

\title{Optimal $(1,\le \ell)$-locating-dominating codes in infinite triangular grid\thanks{A shorter version of this paper has appeared in the conference proceedings of the 37th International Workshop on Combinatorial Algorithms (IWOCA 2026), \cite{das20261}.}}
\author{{\sc Soura Sena Das}$\,^{a}$, {\sc  Tuomo Lehtilä}$\,^{b}$, {\sc Sagnik Sen}$\,^{c}$ \\
\mbox{}\\
{\small $(a)$ Indian Statistical Institute, Kolkata, India}\\
{\small $(b)$ Department of Mathematics and Statistics, University of Turku, FI-20014, Turku, Finland}\\
{\small $(c)$ Indian Institute of Technology Dharwad, India}}

\begin{document}

\maketitle

\begin{abstract}
We study two variants of location-domination in infinite triangular grids which allow locating up to $\ell$ vertices simultaneously. A locating-dominating code $S$ is a dominating vertex set such that every vertex outside of it has a unique neighborhood within the code $S$. As this neighborhood is unique, we may use it as an identifier for locating the considered vertex outside of the vertex set. With the variants we study, we may also locate sets of vertices instead of only single vertices. In particular, we obtain the exact minimum densities of these types of sets in the infinite triangular grid for every value of $\ell$.   
%
%

\end{abstract}


\textbf{Keywords:}
 Locating-dominating codes, $(r,\leq \ell)$-LDA codes, $(r,\leq \ell)$-LDB codes, triangular grid, density of codes.

\section{Introduction}
In this paper, we consider infinite undirected graphs. A vertex set $S$ is \textit{dominating} if every vertex is in $S$ or adjacent to a vertex in $S$. A dominating set $S$ is \textit{locating-dominating} in graph $G$ if every vertex in $V(G)\setminus S$ has a unique neighborhood in $S$ (that is, $S$ is a dominating set with an additional separation property). Locating-dominating codes (or sets) were introduced by Rall and Slater in the 1980s~\cite{rall1984location, slater1987domination, slater1988dominating} and since then they and their variants have been studied in a wide number of articles, see an online bibliography containing over 500 articles on the topic in~\cite{LDbiblio}.

A major branch of research on the topic has been studying the optimal (smallest) density they can obtain in infinite graphs such as triangular, king, square or hexagonal grids (see for example~\cite{cohen2000bounds, honkala2006locating, jean2025optimal}). One may find over 40 such articles considering grids in the online bibliography~\cite{LDbiblio}. While the optimal density is known for many combinations of grids and variants of location-domination, some have remained open or proven hard to solve, such as identifying codes in the hexagonal grid~\cite{cohen2000bounds, cukierman2013new, salo2024finding}. In this paper, we give the exact optimal cardinalities for $(1,\leq \ell)$-locating-dominating codes of types $A$ and $B$ ($(1,\leq \ell)$-LDA and -LDB for short) for each value of $\ell$ in the infinite triangular grid. Previously only the value with $\ell=1$ was known~\cite{honkala2006optimal}. In the cases of other grids, the optimal densities for $(1,\leq \ell)$-LDA and -LDB codes are known for 
king grids~\cite{pelto2011, pelto2012r, pelto2013optimal} and for square grids (except for the case of $(1,\leq 2)$-LDB)~\cite{das2026density}.

The early motivation for studying location-domination stemmed from locating in structures described by graphs. In~\cite{slater1987domination} for \textit{safeguard analysis of critical facilities} and in~\cite{kar-chak-lev}, the authors introduced the concept of identifying codes (which can be understood as a special type of locating-dominating code) for the purpose of locating malfunctioning processors in multiprocessor systems. One interesting structure for the study from this perspective was the infinite grids (here, the infinite grid could be considered to approximate finite architectures based on the grid structures). More recently, these concepts have been connected to neighborhood complexity~\cite{bonnet2024neighbourhood} which is useful for kernelization algorithms.

A perceived weakness of both locating-dominating and identifying codes was their inability to simultaneously locate more than a single object, for example, in the case of multiple simultaneously malfunctioning processors or in sensor networks. For this purpose, $(1,\leq \ell)$-identifying codes were formally introduced 
in~\cite{laihonen2001codes} and briefly discussed in~\cite{kar-chak-lev}
to locate at most $\ell$ objects simultaneously. Later, in~\cite{honkala2004locating}, $(1,\leq \ell)$-locating-dominating codes were introduced. However, as there were two different natural definitions for these concepts, two types, type $A$ and type $B$, were considered. When $\ell=1$, the definitions give the usual locating-dominating codes. When $\ell\geq\Delta(G)+1$, where $\Delta(G)$ is the maximum degree of graph $G$, a $(1,\leq \ell)$-LDB code is equivalent to a complement of an open-irredundant set and also with a multiple intruder locating-dominating code (MILD code)~\cite{lehtilathree}. 

   \medskip
    
    \noindent\textbf{Preliminaries:}
    We follow West~\cite{D.B.West} for standard graph theoretic notation and terminology.
    The graphs considered in this article are undirected and do not have any loops or parallel edges, but they can be infinite.
    
    Given a graph $G$ and a vertex $v$, the ball with center $v$ and radius $r$ is given by 
    $$B_r(v) = \{u~|~d(u,v) \leq r\},$$ 
    where $d(u,v)$ denotes the distance between $u$ and $v$ in $G$. 
    For a vertex subset $F \subseteq V(G)$, the ball around $F$ with radius $r$ is given by 
    $$B_r(F) = \displaystyle{\bigcup_{v\in F}} B_r(v).$$
    A non-empty vertex subset $C \subseteq V(G)$ is called a \textit{code} and its vertices are called \textit{codewords}
    while the vertices of $V(G) \setminus C$ are called \textit{non-codewords}.
    For each vertex $u\in V(G)$,  the \emph{$(I,r)$-set} of $u$ is defined as 
  $$I_r(u) = B_r(u) \cap C.$$

   Furthermore, the \emph{$(I,r)$-set} of a vertex set $F$ is defined as 
    $$I_r(F) = B_r(F) \cap C.$$
    A generalization of locating-dominating codes was introduced by Honkala et al.~\cite{honkala2004locating}.
    In particular, $C$ is called an \textit{$(r, \leq \ell)$-locating-dominating code of type A}, or simply \textit{$(r, \leq \ell)$-LDA code} if $I_r(F_1) \neq I_r(F_2)$ whenever 
    $F_1 \neq F_2$, $F_1 \cap C = F_2 \cap C$, and $|F_1|, |F_2| \leq \ell$ 
    for all $F_1, F_2 \subseteq V(G)$.
    Similarly, $C$ is called an \textit{$(r, \leq \ell)$-locating-dominating code of type B}, or simply \textit{$(r, \leq \ell)$-LDB code} if $I_r(F_1) \neq I_r(F_2)$ whenever 
    $F_1 \neq F_2$, and $|F_1|, |F_2| \leq \ell$ for all $F_1, F_2 \subseteq V(G) \setminus C$.
    Since the objective of our article is to study the 
    $(1, \leq \ell)$-LDA  and the $(1, \leq \ell)$-LDB codes of infinite triangular grids, for simplicity, we will refer to
    the ($I, 1$)-sets as $I$-sets, and will use the notation $I(\cdot)$ instead of $I_1(\cdot)$.

      \medskip

    \begin{figure}[t]
        \centering
        \scalebox{0.8}{


\begin{tikzpicture}
    \begin{pgfonlayer}{nodelayer}

        \foreach \x in {-3,...,3} {
            \foreach \y in {-3,...,3} {

                \pgfmathsetmacro{\xoffset}{\y * 0.5}
            
                \pgfmathtruncatemacro{\absy}{abs(\y)}
                \pgfmathtruncatemacro{\mody}{mod(\absy,2)}
                \pgfmathsetmacro{\realx}{\x + \xoffset}

                \pgfmathsetmacro{\fillcolor}{(\x == 0 || \y==0) ? "gray!60" : "white"}

                \pgfmathtruncatemacro{\absx}{abs(\x)}
                \pgfmathtruncatemacro{\labx}{\x < 0 ? 1 : 0}
                \pgfmathtruncatemacro{\laby}{\y < 0 ? 1 : 0}

                \node [style=Vertex, fill=\fillcolor] (v\x\y) at (\realx, \y)
                {\footnotesize$
                    v_{
                    \ifnum\labx=1 \overline{\absx}\else\x\fi,
                    \ifnum\laby=1 \overline{\absy}\else\y\fi}$};
            }
        }
        
        \node [style=Vertex, fill=white] (v-4-1) at (-4.5, -1)          {\footnotesize$v_{\overline{4},\overline{1}}$};
        
        \node [style=Vertex, fill=gray!60] (v-40) at (-4, 0)          {\footnotesize$v_{\overline{4},0}$};
        
        \node [style=Vertex, fill=white] (v-41) at (-3.5, 1)          {\footnotesize$v_{\overline{4},1}$};
        \node [style=Vertex, fill=white] (v-51) at (-4.5, 1)          {\footnotesize$v_{\overline{5},1}$};
        
        \node [style=Vertex, fill=white] (v-42) at (-3, 2)          {\footnotesize$v_{\overline{4},2}$};
        \node [style=Vertex, fill=white] (v-52) at (-4, 2)          {\footnotesize$v_{\overline{5},2}$};
        
        \node [style=Vertex, fill=white] (v-43) at (-2.5, 3)          {\footnotesize$v_{\overline{4},3}$};
        \node [style=Vertex, fill=white] (v-53) at (-3.5, 3)          {\footnotesize$v_{\overline{5},3}$};
        \node [style=Vertex, fill=white] (v-63) at (-4.5, 3)          {\footnotesize$v_{\overline{6},3}$};

        \node [style=Vertex, fill=white] (v51) at (4.5, 1)          {\footnotesize$v_{5,1}$};

        \node [style=Vertex, fill=gray!60] (v40) at (4, 0)          {\footnotesize$v_{4,0}$};

        \node [style=Vertex, fill=white] (v4-1) at (3.5, -1)          {\footnotesize$v_{4,\overline{1}}$};
        \node [style=Vertex, fill=white] (v5-1) at (4.5, -1)          {\footnotesize$v_{5,\overline{1}}$};

        \node [style=Vertex, fill=white] (v4-2) at (3, -2)          {\footnotesize$v_{4,\overline{2}}$};
        \node [style=Vertex, fill=white] (v5-2) at (4, -2)          {\footnotesize$v_{5,\overline{2}}$};

        \node [style=Vertex, fill=white] (v4-3) at (2.5, -3)          {\footnotesize$v_{4,\overline{3}}$};
        \node [style=Vertex, fill=white] (v5-3) at (3.5, -3)          {\footnotesize$v_{5,\overline{3}}$};
        \node [style=Vertex, fill=white] (v6-3) at (4.5, -3)          {\footnotesize$v_{6,\overline{3}}$};

    \end{pgfonlayer}
    
    \begin{pgfonlayer}{edgelayer}
        \foreach \y in {-3,...,3} {
            \foreach \x in {-3,...,2} {
                \pgfmathtruncatemacro{\nextx}{\x + 1}
                \draw (v\x\y) to (v\nextx\y);
            }
        }
        
        \foreach \x in {-3,...,3} {
            \foreach \y in {-3,...,2} {
                \pgfmathtruncatemacro{\nexty}{\y + 1}
                \draw (v\x\y) to (v\x\nexty);
            }
        }
    
        \foreach \x in {-2,...,3} {
            \foreach \y in {-3,...,2} {
                \pgfmathtruncatemacro{\prevx}{\x - 1}
                \pgfmathtruncatemacro{\nexty}{\y + 1}
                \draw (v\x\y) to (v\prevx\nexty);
            }
        }

        \draw (v-43) to (v-33);
        \draw (v-43) to (v-32);
        \draw (v-43) to (v-42);
        \draw (v-43) to (v-53);

        \draw (v-53) to (v-42);
        \draw (v-53) to (v-52);
        \draw (v-53) to (v-63);

        \draw (v-63) to (v-52);

        \draw (v-42) to (v-32);
        \draw (v-42) to (v-31);
        \draw (v-42) to (v-41);
        \draw (v-42) to (v-52);

        \draw (v-52) to (v-51);
        \draw (v-52) to (v-41);

        \draw (v-41) to (v-30);
        \draw (v-41) to (v-31);
        \draw (v-41) to (v-40);

        \draw (v-51) to (v-40);
        \draw (v-51) to (v-41);

        \draw (v-40) to (v-30);
        \draw (v-40) to (v-3-1);
        \draw (v-40) to (v-4-1);

        \draw (v-4-1) to (v-3-2);
        \draw (v-4-1) to (v-3-1);

        \draw (v51) to (v32);
        \draw (v51) to (v31);
        \draw (v51) to (v40);

        \draw (v40) to (v30);
        \draw (v40) to (v31);
        \draw (v40) to (v4-1);
        \draw (v40) to (v5-1);

        \draw (v4-1) to (v30);
        \draw (v4-1) to (v3-1);
        \draw (v4-1) to (v5-1);
        \draw (v4-1) to (v4-2);
        \draw (v4-1) to (v5-2);

        \draw (v4-2) to (v3-2);
        \draw (v4-2) to (v3-1);
        \draw (v4-2) to (v5-3);
        \draw (v4-2) to (v4-3);
        \draw (v4-2) to (v5-2);

        \draw (v5-2) to (v6-3);
        \draw (v5-2) to (v5-3);
        \draw (v5-2) to (v5-1);

        \draw (v4-3) to (v3-2);
        \draw (v4-3) to (v3-3);

    \end{pgfonlayer}
\end{tikzpicture}
}
        \caption{The vertices of the infinite triangular grid are denoted by $v_{i,j}$ where $i,j \in \mathbb{Z}$. 
        Moreover, we use the notation $\overline{i}$ instead of $-i$ in the subscript in the index of $v$. 
        The infinite triangular grid is embedded in $\mathbb{R}^2$
        with the coordinates of the vertex $v_{i,j}$ being 
        $\left(  i + \frac{1}{2}\ j, \frac{\sqrt{3}}{2}\ j \right)$. The above figure depicts this particular embedding, which we will follow in this article as a convention, unless otherwise stated.}
        \label{fig:coordinate}
    \end{figure}

    When $\ell=1$, both $(1,\leq \ell)$-LDA and -LDB codes give an equivalent definition with the usual locating-dominating codes: a code $C$ is a $(1,\leq 1)$-LDA or -LDB code if and only if $C$ is dominating and $I(x)\neq I(y)$ for each $x\neq y$ where $x,y\in V(G)\setminus C$. Note that here the domination condition is due to allowing an empty set $F$ in the definitions of LDA and LDB codes. 
    Moreover, we know that any $(1,\leq \ell)$-LDA code is also a  $(1,\leq \ell)$-LDB code. Similarly, we have that any $(1,\leq \ell+1)$-LDA ($(1,\leq \ell+1)$-LDB) code is also a $(1,\leq \ell)$-LDA ($(1,\leq \ell)$-LDB) code~\cite{honkala2004locating}.

In this article, our focus is on $(1,\leq \ell)$-locating-dominating codes of types A and B on the infinite triangular grid, denoted by $\mathbf{G_6}$. Although we provide some results also for the case $r>1$.
    
   \begin{definition}
       Let $\mathbf{G_6}$ be the infinite triangular grid with vertex set $V(\mathbf{G_6})$ and edge set $E(\mathbf{G_6})$. 
    We define $V(\mathbf{G_6})$ as a collection of the following points of the plane $\mathbb{R}^2$ 
    $$ V(\mathbf{G_6}) =  \left\{ v_{i,j} = \left(  i + \frac{1}{2}\ j, \frac{\sqrt{3}}{2}\ j \right) \mid i,j\in \mathbb{Z}\right\}, $$ and two vertices $ v_{i,j},v_{i',j'} $ are adjacent iff their Euclidean distance is exactly one. 
    In other words, $ v_{i,j}$ is adjacent to 
    $v_{i',j'}$ in $\mathbf{G_6}$ if one of the following holds:
    \begin{enumerate}[(i)]
        \item $i = i'$, $|j-j'| = 1$,
        \item $j = j'$, $|i-i'| = 1$,
        \item $(i-i') = (j'-j) = 1$,
        \item $(i'-i) = (j-j') = 1$. 
    \end{enumerate}
For notational convenience, 
    the negative subscripts of the type $-i$ of the vertices of 
    $\mathbf{G_6}$ are replaced by $\overline{i}$. For instance, we write $v_{1,\overline{1}}$ instead of $v_{1, -1}$. 
    See Figure~\ref{fig:coordinate} for a reference illustration of a portion of the infinite triangular grid. 
   \end{definition}

    Our interest is in the relative size of vertex sets in the infinite graphs. To measure this, we introduce the following notion of density. 
    
  \begin{definition}\label{DefDens}
      The \textit{density} of a code $C$ of $\mathbf{G_6}$ is given by 
    $$D(C) = \limsup_{n \rightarrow \infty} \frac{|C \cap B_n(v_{0,0})|}{|B_n(v_{0,0})|}.$$
  \end{definition}

   \begin{remark}\label{rem density}
        We note that in~\cite{sampaio2024density}, it has been shown for connected graphs with the slow-growth property~\cite{sampaio2024density} (such as the infinite triangular grid), that the density of a vertex set can be defined using any center point (in our case, we choose the center point as $v_{0,0}$). While Definition~\ref{DefDens} is quite technical, often the simplest method for finding the density of a given periodic vertex set $C$ in an infinite grid, is to find a periodic tiling covering the entire grid in which each tile has a constant size and contains a constant number of vertices in $C$. For example, if each tile has $n$ vertices out of which $c$ belong to code $C$, then we have $D(C)=\frac{c}{n}$,~\cite[Lemma 3.4]{sampaio2024density}.
   \end{remark}

    An $(r, \leq \ell)$-LDA code (resp., $(r, \leq \ell)$-LDB code) is \textit{minimum} if there does not exist any 
    $(r, \leq \ell)$-LDA code (resp., $(r, \leq \ell)$-LDB code) with a strictly smaller density. Moreover, 
    the density of a minimum $(r, \leq \ell)$-LDA code (resp., $(r, \leq \ell)$-LDB code) for graph $G$ is denoted by $D^A_{r,\ell}(G)$ (resp., $D^B_{r,\ell}(G)$). In the case of the infinite triangular grid $\mathbf{G_6}$, we use the shorthand notation $D^A_{r,\ell}$ (resp., $D^B_{r,\ell}$) instead of $D^A_{r,\ell}(\mathbf{G_6})$ (resp., $D^B_{r,\ell}(\mathbf{G_6})$). 
    Our objective is to compute the $D^A_{1,\ell}$ and $D^B_{1,\ell}$  for all $\ell \geq 1$.

\medskip

    \noindent\textbf{Related research:} In the following, we present the related research. As the literature on location-domination is quite broad, we focus on related results in the infinite triangular grid and on results for $(r,\leq \ell)$-LDA/LDB codes in other types of infinite grids.

    Recall that when $\ell=1$, both $(1,\leq 1)$-LDA and -LDB codes are equivalent to the usual locating-dominating codes. In~\cite{honkala2006optimal}, Honkala has given the exact minimum density of locating-dominating codes in the infinite triangular grid.
    \begin{theorem}[\cite{honkala2006optimal}]\label{thmTri1LD}
        The density of a minimum locating-dominating code for the infinite triangular grid $\mathbf{G_6}$ is:
        $$D^A_{1,1}=D^B_{1,1}=\frac{13}{57}.$$
    \end{theorem}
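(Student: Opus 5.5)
The claim has two halves, and I will prove them separately: a periodic code of density $\frac{13}{57}$ exists, and no locating-dominating code can have smaller density. Since $(1,\leq 1)$-LDA and $(1,\leq 1)$-LDB codes both coincide with ordinary locating-dominating codes, it suffices to treat that single notion.

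\textbf{Upper bound.} I would construct a code that is periodic under a lattice of index $57$ in $\mathbb{Z}^2$. The coordinates $v_{i,j}$ identify the grid with $\mathbb{Z}^2$, and a natural candidate is a sublattice generated by $(8,-1)$ and $(1,7)$, which has determinant $57$. The code contains $13$ codewords in one fundamental domain. To build it, I would pack clusters, such as pairs or triangles of adjacent codewords, so that most non-codewords see exactly two codewords and each pair of codewords is used as an $I$-set at most once.

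Verification is then a finite check on one fundamental tile. Every non-codeword must have a nonempty $I$-set, and these $I$-sets must be pairwise distinct. Remark~\ref{rem density} then gives density $\frac{13}{57}$.

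\textbf{Lower bound.} I would use a discharging (share) argument. Each non-codeword $u$ gives charge $1/|I(u)|$ to each codeword in $I(u)$, and each codeword keeps a charge of $1$ for itself. Define the share of a codeword $c$ as
$$s(c)=\sum_{u\in B_1(c)} \frac{1}{|I(u)|}.$$
If $s(c)\le \frac{57}{13}$ on average, the density is at least $\frac{13}{57}$.

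In the locally worst case, $c$ is a codeword whose closed neighbourhood contains no other codeword. Then the six non-codeword neighbours all see $c$, and at most one of them has $I$-set exactly $\{c\}$. The remaining neighbours must see $c$ together with another codeword at distance $2$ from $c$, and distinct neighbours need distinct such pairs. A bound $s(c)\le 1+1+5\cdot\frac12=\frac92$ is too weak, since $\frac92>\frac{57}{13}\approx 4.385$. The excess must therefore be redistributed.

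I would first classify codewords by the structure of their neighbourhood: isolated, in a pair, or in a triangle. For each class I would compute the maximum share, using the uniqueness of $I$-sets to exclude configurations where too many neighbours have $I$-sets of size $2$. I would then design discharging rules that move charge from codewords with share above $\frac{57}{13}$ to nearby codewords, within distance $2$ or $3$, whose share is well below it. For example, a codeword lying in a triangle has share at most about $3+\frac{\text{small}}{2}$, which is well below the threshold.

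\textbf{Main obstacle.} The hard part is making these rules sum exactly to $\frac{57}{13}$ after averaging, since this is what makes the bound tight. That requires a careful case analysis of how an isolated codeword with a singleton-$I$-set neighbour interacts with the codewords at distance $2$. I would handle it with a finite, potentially computer-assisted, enumeration of the local configurations in $B_3(c)$, and I would use the extremal construction as a guide for which configurations reach equality.

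Since this is Honkala's published result, in the paper I would instead simply cite~\cite{honkala2006optimal} for both bounds.
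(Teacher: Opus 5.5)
The paper gives no proof of this statement. It quotes the result from \cite{honkala2006optimal} and uses it only as part (a) of Theorems~\ref{thm lda} and~\ref{thm ldb}. The only reasoning the paper adds is the remark you also make: for $\ell=1$, both $(1,\leq 1)$-LDA and $(1,\leq 1)$-LDB codes are exactly the ordinary locating-dominating codes, with domination coming from allowing $F=\emptyset$. Your closing decision to simply cite Honkala is therefore precisely the paper's treatment, and it is the appropriate one.

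Taken as a self-contained proof, however, your sketch establishes neither bound.

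For the upper bound, you never exhibit the code. The fact that $(8,-1)$ and $(1,7)$ span a sublattice of index $57$ is only a counting consistency check. Nothing shows that a $13$-element pattern modulo that particular lattice is dominating with pairwise distinct $I$-sets, so the ``finite check'' has nothing to check.

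For the lower bound, you correctly observe that the naive share estimate for an isolated codeword, $1+1+5\cdot\frac12=\frac92$, exceeds $\frac{57}{13}$, so charge must be redistributed. But you leave unspecified both the redistribution rules and the verification that every codeword ends with share at most $\frac{57}{13}$. These are exactly the content of Honkala's lower bound. You defer them to an enumeration over $B_3(c)$ that is not shown to suffice, so the ``main obstacle'' you name is the whole theorem.

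A minor inaccuracy as well: under the share formula you wrote, a codeword $c$ contributes $1/|I(c)|$ to each codeword of $I(c)$. It ``keeps a charge of $1$ for itself'' only when it has no codeword neighbour.
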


    In~\cite{junnila2018regular}, the authors have introduced two new types of locating-dominating codes (self-locating-dominating, SLD, and solid-locating-dominating, DLD) which can locate a single vertex and in addition detect if the objective is to locate more than one vertex. Moreover, any $(1,\leq 2)$-LDA code is also an SLD code which is also an LD-code and any $(1,\leq 2)$-LDB code is also an DLD code which is also an LD-code. Hence, any construction for $(1,\leq 2)$-LDA/LDB codes gives an upper bound for the minimum density of SLD/DLD codes and any lower bound on the minimum density of SLD/DLD codes gives a lower bound for the minimum density. In~\cite{junnila2026new}, the authors have presented quite simple arguments which lead to the exact minimum density of $\frac{1}{4}$ for SLD/DLD codes in the infinite triangular grid. While this is the best known lower bound for $D^A_{1,2}$ and $D^B_{1,2}$, unfortunately similar simple arguments do not work for proving their minimum densities.

    In~\cite{honkala2004identification}, the authors have considered $(1,\leq 2)$-identifying codes in the infinite triangular grids. Moreover, any $(1,\leq 2)$-identifying code is also a $(1,\leq 2)$-LDA/LDB code and hence any upper bound for the minimum density of $(1,\leq 2)$-identifying codes is also an upper bound for the minimum density of $(1,\leq 2)$-LDA/LDB codes. In~\cite{honkala2004identification}, the authors have shown that the minimum density of $(1,\leq 2)$-identifying codes in the infinite triangular grids is $\frac{9}{16}$. Moreover, they have shown that there does not exist any $(1,\leq \ell)$-identifying codes for $\ell\geq3$ in the infinite triangular grid. Moreover, it is known that the density of optimal $(r,\leq 2)$-identifying codes in the triangular grid tends to 0 as $r$ increases (unlike in some other grids such as the square grid). As identifying codes provide upper bounds for locating-dominating codes, also the density of $(r,\leq 2)$-LDA/LDB tend to 0 as $r$ tends to infinity.

    As we have mentioned, previously $(1,\leq \ell)$-LDA and -LDB codes have been studied in the infinite king grid and the infinite square grid. For the infinite square grid $\mathbf{G_4}$ we know for LDA $D^A_{1,1}(\mathbf{G_4})=\frac{3}{10}$~(\cite{slater2002fault}), $D^A_{1,2}(\mathbf{G_4})=\frac{1}{2}$~(\cite{das2026density}), and $D^A_{r,\ell}(\mathbf{G_4})=1$ for each $r\geq1$ and $\ell\geq3$~(\cite{das2026density}). For LDB, we know $D^B_{1,1}(\mathbf{G_4})=\frac{3}{10}$~(\cite{slater2002fault}), $\frac{2}{5}\leq D^B_{1,2}(\mathbf{G_4})\leq\frac{5}{11}$~(\cite{das2026density}), $D^B_{1,\ell}(\mathbf{G_4})=\frac{1}{2}$~(\cite{das2026density}) for each $\ell\geq3$. In the case of the infinite king grid $\mathbf{G_8}$, we know $D^A_{1,1}(\mathbf{G_8})=D^B_{1,1}(\mathbf{G_8})=\frac{1}{5}$~(\cite{honkala2006locating}), $D^A_{1,2}(\mathbf{G_8})=D^B_{1,2}(\mathbf{G_8})=\frac{1}{3}$~(\cite{pelto2012r}), $D^A_{1,\ell}(\mathbf{G_8})=1$ for each $\ell\geq3$~(\cite{pelto2011}), $D^B_{1,3}(\mathbf{G_8})=\frac{3}{5}$~(\cite{pelto2013optimal}) and $D^B_{1,3}(\mathbf{G_8})=\frac{2}{3}$~(\cite{pelto2011}). To summarize, the only open case for radius $r=1$ in the king and square grids is obtaining $D^B_{1,2}(\mathbf{G_4})$.

    The larger values of $r$ have been studied through multiple articles in the case of the infinite king grid. However, the only known tight values are $D^B_{2,3}(\mathbf{G_8})=\frac{2}{3}$~(\cite{pelto2013optimal}), $D^B_{2,\ell}(\mathbf{G_8})=\frac{4}{5}$ for each $\ell\geq9$~(\cite{pelto2011}), $D^B_{r,3}(\mathbf{G_8})=\frac{r}{r+1}$ for each $r\geq3$~(\cite{pelto2013optimal}) and $D^B_{r,\ell}(\mathbf{G_8})=\frac{2r}{2r+1}$ for each $\ell>4r\geq12$~(\cite{pelto2011}). 
\medskip

    \noindent\textbf{Our contributions:}
    We summarize our findings in the following two theorems. 

    \begin{theorem}\label{thm lda}
        The density of a minimum $(r, \leq \ell)$-LDA code for the infinite triangular grid $\mathbf{G_6}$ for all $\ell \geq 1$ and some $r$ is given by:
        \begin{enumerate}[(a)]
            \item $D^A_{1,1} = \frac{13}{57}$~\cite{honkala2006optimal},
            \item $D^A_{1,2} = \frac{1}{3}$,
            \item $D^A_{r,\ell} = 1$, for all $\ell \geq 3$ and $r\geq1$. 
        \end{enumerate}
    \end{theorem}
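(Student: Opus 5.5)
For part (a) there is nothing to prove: it is Theorem~\ref{thmTri1LD}, since $(1,\leq 1)$-LDA codes are exactly the usual locating-dominating codes. I will prove (c) first because it is short, and then (b), which needs a lower bound and a matching construction.

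\emph{Part (c).} Since every $(r,\leq \ell+1)$-LDA code is a $(r,\leq \ell)$-LDA code, it suffices to treat $\ell=3$. I will show that every vertex must be a codeword, so $C=V(\mathbf{G_6})$ and the density is $1$. Suppose $u\notin C$, and let $v_0,v_3$ be two \emph{opposite} neighbours of $u$, i.e.\ $u$ is the midpoint of the segment $v_0v_3$. Put $F_1=\{v_0,v_3\}$ and $F_2=\{v_0,v_3,u\}$. Then $|F_1|,|F_2|\le 3$ and $F_1\neq F_2$, and $F_1\cap C=F_2\cap C$ because $u\notin C$. The key geometric claim is $B_r(u)\subseteq B_r(v_0)\cup B_r(v_3)$. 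For $r=1$ this is checked directly: the six neighbours of $u$ are each adjacent to $v_0$ or to $v_3$. For general $r$, $B_r(u)$ is a hexagon. For any $w\in B_r(u)$, moving the centre one step towards $w$ along the axis direction $\pm(v_0-u)$ does not increase the hexagonal distance, so $w\in B_r(v_0)$ or $w\in B_r(v_3)$. I will verify this carefully in coordinates $(i,j)$, using the standard formula for the triangular-grid distance. The claim gives $I_r(F_1)=I_r(F_2)$, contradicting the LDA property.

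\emph{Part (b), upper bound.} I will exhibit a periodic code of density $\frac13$. My first choice is the set of rows $C=\{v_{i,j}: j\equiv 0 \pmod 3\}$. With this choice every non-codeword lies in a row adjacent to exactly one code row and has exactly two codeword neighbours, forming an adjacent pair in that code row. The drawback is that the two non-code rows between consecutive code rows then see the same kind of pairs, so uniqueness may fail. If that happens I will instead use a period-$3$ pattern in both directions (a $3\times 3$ or $3\times 6$ tile with the right number of codewords), or shift each code row appropriately. I will then verify the $(1,\leq 2)$-LDA condition by a finite case check on one tile, using Remark~\ref{rem density} to read off the density $\frac13$. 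The cases are pairs $F_1,F_2$ with $|F_i|\le 2$ and equal code part: two non-codewords, one non-codeword plus a fixed codeword, and a non-codeword versus the empty set.

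\emph{Part (b), lower bound.} This is the main obstacle; I will use a share (discharging) argument. First I extract local consequences of the $(1,\leq 2)$-LDA property:
\begin{enumerate}[(i)]
\item Comparing $\{c,u\}$ with $\{c\}$, for a codeword $c$ and any non-codeword $u$, shows that $I(u)\not\subseteq N[c]$. In particular every non-codeword has codeword neighbours that are not all within one closed neighbourhood; in practice $|I(u)|\ge 2$ with some spread.
\item Comparing $\{u\}$ with $\{u,v\}$ and $\{u\}$ with $\{v\}$ gives, for distinct non-codewords $u,v$, that $I(u)\not\subseteq I(v)$.
\end{enumerate}
Next define the share $s(c)=\sum_{u\in N[c]}1/|I(u)|$. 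The standard averaging argument gives density $\ge 1/\overline{s}$, where $\overline{s}$ is the average share, so the goal is to show that the average share is at most $3$. Codewords whose share exceeds $3$ have many non-codeword neighbours with $|I(u)|=2$, and (i) and (ii) force nearby codewords to have small share. I will redistribute the excess from such codewords to nearby low-share codewords and check, by a case analysis on the configuration of $N[c]\cap C$, that after redistribution every codeword has share at most $3$. Property (ii) is the natural tool to exclude dense clusters of $|I(u)|=2$ vertices. The delicate step is bounding the share of an isolated codeword or of an edge of two codewords. I expect this case analysis to be the longest part of the proof.
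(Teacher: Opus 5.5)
Parts (a) and (c) are fine, and (c) is the paper's own argument. The paper takes $u=v_{0,0}$ with the opposite neighbours $v_{\overline{1},1},v_{1,\overline{1}}$ and simply asserts $I_r(F_1)=I_r(F_2)$ for all $r$. Your coordinate check with $d=\max(|a|,|b|,|a+b|)$ actually supplies that step: the ``one step closer'' claim holds for every $w\neq u$, and $u$ itself is at distance $1\leq r$ from both. Part (b), however, has genuine gaps in both directions.

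\emph{Upper bound.} The row code $\{v_{i,j}: j\equiv 0 \pmod 3\}$ is not even locating-dominating. The collision is between the non-code rows on the two sides of a code row, e.g.\ $I(v_{i+1,2})=\{v_{i,3},v_{i+1,3}\}=I(v_{i,4})$. It also violates your own condition (i): every non-codeword $u$ sees exactly two \emph{adjacent} codewords $c,c'$, so $I(u)\subseteq N[c]$. The fallback (``a $3\times 3$ or $3\times 6$ tile'', ``shift each code row'') is not a construction. The code that works, and is the paper's, is a colour class of the proper $3$-colouring, $C=\{v_{i,j}: i-j\equiv 2\pmod 3\}$. It is independent, so $I(c)=\{c\}$ for $c\in C$. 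Every non-codeword has exactly three codeword neighbours, pairwise at distance $2$. Adjacent non-codewords share exactly two codewords, and non-adjacent ones share at most one. From these facts the $(1,\leq 2)$-LDA check takes a few lines.

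\emph{Lower bound.} Here the proposal states the goal rather than proving it. Average share at most $3$ is the right target, since the colour class has share exactly $3$ at every codeword. But you give no redistribution rule and no case check, and that is the whole difficulty. Excess share is not rare: an isolated codeword with one neighbour having $|I(u)|=2$ and five having $|I(u)|=3$ already has share $3+\frac16$, and a priori the share can reach $4$.

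The paper avoids shares and discharges onto non-codewords with two rules. Under (R1), each codeword $c$ sends $1/(7-|I(c)|)$ to each non-codeword neighbour, so codewords keep charge $\geq 0$. Under (R2), a non-codeword with charge above $\frac12$ splits its surplus equally among its non-codeword neighbours below $\frac12$. Charge at least $\frac12$ on every non-codeword then gives density at least $\frac13$. This is proved for $(1,\leq 2)$-LDB codes, which covers LDA.

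Your (i) and (ii) would actually shorten that case analysis for LDA. They give $|I(u)|\geq 2$, and if $|I(u)|=2$ the two codewords must be opposite neighbours of $u$. Adjacent ones violate (i). If they were at distance two on the hexagon, the middle neighbour $x$ would be a non-codeword with $I(u)\subseteq I(x)$, violating (ii). So $|I(u)|\geq 3$ yields $\frac12$ from (R1) alone, and only the opposite pair needs work; this is the paper's Subcase 2.3. There, Claim 1 extracts an extra $\frac{1}{15}$ through (R2) from $v_{0,1}$ (resp.\ $v_{1,\overline{1}}$) whenever $v_{1,1}$ (resp.\ $v_{2,\overline{1}}$) is a non-codeword. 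When those outer vertices are codewords, $|I(v_{1,0})|$ grows and (R1) compensates, so each side contributes at least $\frac14$.
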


    \begin{theorem}\label{thm ldb}
        The density of a minimum $(1, \leq \ell)$-LDB code for the infinite triangular grid $\mathbf{G_6}$ for all $\ell \geq 1$ is given by:
        \begin{enumerate}[(a)]
            \item $D^B_{1,1} = \frac{13}{57}$~\cite{honkala2006optimal},
            \item $D^B_{1,2} = \frac{1}{3}$,
            \item $D^B_{1,3} = \frac{4}{7}$,
            \item $D^B_{1,\ell} = \frac{2}{3}$, for all $\ell \geq 4$. 
        \end{enumerate}
    \end{theorem}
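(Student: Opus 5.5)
Since every $(1,\leq \ell)$-LDA code is a $(1,\leq \ell)$-LDB code, and $(1,\leq \ell+1)$-LDB codes are $(1,\leq \ell)$-LDB codes, I would organise the proof as four matching upper and lower bounds. Part (a) is Theorem~\ref{thmTri1LD}. Every other part needs an explicit periodic construction together with a lower bound proved by a share/discharging argument.

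\emph{Upper bounds.} For (b), the $(1,\leq 2)$-LDA construction of density $\frac13$ from Theorem~\ref{thm lda}(b) is automatically a $(1,\leq 2)$-LDB code. For (c) and (d), I would exhibit periodic codes and compute their density by tiling, as in Remark~\ref{rem density}.
\begin{itemize}
\item For (d), I would aim for a code of density $\frac23$ whose non-codewords form a sparse pattern of density $\frac13$. Each non-codeword $x$ must have a \emph{private} codeword neighbour, that is, a codeword whose only non-codeword neighbour is $x$. Then for any family $F$ of non-codewords with $x\notin F$ we get $I(x)\not\subseteq I(F)$, which gives the LDB property for every $\ell$.
\item For (c), I would look for a tile of size $7$ with $4$ codewords. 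The natural candidate uses the perfect code structure of $\mathbf{G_6}$ with period $7$, keeping a suitable set of $4$ of the $7$ classes. I would then check by a finite case analysis that sets of at most $3$ non-codewords are separated.
\end{itemize}

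\emph{Lower bounds.} First I would derive local necessary conditions from the definition. Suppose $x$ is a non-codeword and $F$ is a set of at most $\ell-1$ non-codewords with $x\notin F$. If $I(x)\subseteq I(F)$, then $F$ and $F\cup\{x\}$ have the same $I$-set, which is forbidden. Hence for every non-codeword $x$, the set $I(x)$ cannot be covered by the $I$-sets of $\ell-1$ other non-codewords.
\begin{itemize}
\item For $\ell\geq 4$, I would argue that three other non-codewords adjacent to the codewords of $I(x)$ can cover $I(x)$ unless some codeword in $I(x)$ is private to $x$, or the configuration is very dense in codewords. This needs the geometry: every codeword neighbour of $x$ shares two common neighbours with $x$.
\item With that in hand, I would run a discharging argument. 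Each codeword starts with charge $1$ and each non-codeword with charge $0$. Codewords send charge to adjacent non-codewords, with rules weighted by how many non-codeword neighbours the codeword has; a private codeword gives most of its charge to its unique non-codeword. The goal is that afterwards every vertex holds at least $\frac23$ (for (d)) or $\frac47$ (for (c)). For (c) the condition is weaker: only two other non-codewords may be used to cover $I(x)$.
\item For (b), the condition is that $I(x)$ is not contained in $I(y)$ for any single non-codeword $y$, together with domination. These give $|I(x)|\geq 2$ unless $x$ is isolated from other non-codewords' neighbourhoods in a suitable sense. A share argument (each codeword $c$ gives $1/|I(x)|$-type shares, i.e.\ $\sum_{c}1/|N[c]\setminus C|$ style) should reach $\frac13$.
\end{itemize}

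I expect the main obstacle to be the lower bound $\frac47$ in (c). There the local condition is genuinely non-local: coverings by two other non-codewords can reach distance $2$ from $x$. The discharging rules will then need a careful case analysis on $|I(x)|\in\{1,2,3\}$ and on the positions of the codewords of $I(x)$ around $x$. I would first try to prove that any non-codeword with $|I(x)|\leq 2$ forces a nearby codeword with few non-codeword neighbours, and route extra charge from that codeword to $x$.
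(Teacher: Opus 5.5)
\textbf{Gap in the lower bound for (b).} For (b) you propose to use only domination and $I(x)\not\subseteq I(y)$ for distinct non-codewords $x,y$. These are exactly the DLD (solid-locating-dominating) conditions, and they admit codes of density $\frac14$ in $\mathbf{G_6}$, so no argument built on them can reach $\frac13$.

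Take $C=\{v_{i,j}: i,j \text{ both even}\}$. Every non-codeword has exactly two codeword neighbours, and they are opposite each other: for example $I(v_{1,0})=\{v_{0,0},v_{2,0}\}$, $I(v_{0,1})=\{v_{0,0},v_{0,2}\}$, $I(v_{1,1})=\{v_{2,0},v_{0,2}\}$, and translates. Hence each non-codeword is the unique common neighbour of its two codewords, and no containment occurs. If each codeword splits its unit charge among its non-codeword neighbours, such a vertex collects only $\frac16+\frac16=\frac13$, not the required $\frac12$. In fact no share or discharging argument resting on these constraints alone can prove more than $\frac14$.

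What rules this code out is an \emph{exchange} condition: $I(\{v_{1,0},v_{1,1}\})=I(\{v_{0,1},v_{1,1}\})=\{v_{0,0},v_{2,0},v_{0,2}\}$. The paper uses exactly this in the critical configuration of two opposite codeword neighbours (Subcase 2.3 and Claim 1). Suppose $v_{0,0}$ has only $v_{1,0},v_{\overline{1},0}$ in $C$ and $v_{1,1}\notin C$. Then $v_{0,2}\in C$, since otherwise $I(\{v_{0,0},v_{\overline{1},1}\})=I(\{v_{0,1},v_{\overline{1},1}\})$. Hence also $v_{\overline{1},2}\in C$, and the non-codeword $v_{0,1}$ has a surplus of at least $\frac{1}{15}$. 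In this case the direct contribution of $v_{1,0},v_{\overline{1},0}$ to $v_{0,0}$ may be only $\frac16+\frac16$. So the paper also needs its second rule (R2), which passes surplus from a non-codeword to adjacent deficient non-codewords. Your scheme only moves charge from codewords to their neighbours, so it would need an analogous second stage.

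\textbf{Remarks on (c) and (d).} Your covering condition is the paper's framework: $I(x)\not\subseteq I(F)$ for sets $F$ of at most $\ell-1$ non-codewords with $x\notin F$, together with codewords splitting their charge among non-codeword neighbours (rule (R1)). However, the difficulty you anticipate is misplaced: the cases $|I(x)|\leq 2$ never occur. Since $N[v_{1,0}]\subseteq N[v_{0,0}]\cup N[v_{2,0}]$, three collinear non-codewords are forbidden when $\ell\geq 3$. Four non-codeword neighbours of $x$ would contain an opposite pair, giving such a triple. The real case analysis is for $|I(x)|\geq 3$, and there the paper again invokes (R2) in several subcases.

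Your constructions are also only sketched. For (c), the choice of four classes of the perfect colouring matters. The three non-codeword classes must form a triangle in each closed neighbourhood, as in the paper's code (for example $v_{0,0},v_{1,0},v_{0,1}$). Every other choice puts three non-codewords on a line around some vertex.
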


\medskip


 \noindent\textbf{Organization:}
    The rest of the article presents the proofs of Theorems~\ref{thm lda} and~\ref{thm ldb} in Section~\ref{sec proof}. We present upper bound constructions for Theorems~\ref{thm lda}(b) and~\ref{thm ldb}(b) in Section~\ref{secUpp12bb}, then in Section~\ref{sec1c} we prove Theorem~\ref{thm lda}(c). We continue in Section~\ref{sec2cupp} by giving a tight construction for Theorem~\ref{thm ldb}(c) and in Section~\ref{sec2dupp} for Theorem~\ref{thm ldb}(d). Section~\ref{subsection general technique} discusses our general proof technique for the remaining lower bounds. 
    Then, in the remaining sections we prove the lower bounds.
    We prove the lower bounds of Theorem~\ref{thm lda}(b) and Theorem~\ref{thm ldb}(b) in Section~\ref{prf ldb}.
    We further discuss the lower bounds of Theorem~\ref{thm ldb}(c) and Theorem~\ref{thm ldb}(d) in Section~\ref{proof ldbc} and Section~\ref{prf ldbd} respectively.
    Finally, we conclude in Section~\ref{SecConclusion}.



\section{Proofs of Theorems~\ref{thm lda} and~\ref{thm ldb}}\label{sec proof}
In this section, we present all the proofs. The proofs use similar techniques. The upper bounds are obtained through providing explicit examples. The lower bounds use the discharging method. 
The proof of Theorem~\ref{thm lda} is short, and partially gets implied by Theorem~\ref{thm ldb}. Notice that Theorem~\ref{thm lda}(a) and Theorem~\ref{thm ldb}(a) were already proved 
by Honkala~\cite{honkala2006optimal}, see Theorem~\ref{thmTri1LD}. Thus, our focus is on proving the rest. We present each of the proofs in a separate subsection for the convenience of the readers.

\subsection{Proof of Theorems~\ref{thm lda}(b) and~\ref{thm ldb}(b) (upper bound)}\label{secUpp12bb}
        A $(1, \leq 2)$-LDA code of $\mathbf{G_6}$ 
        with density $\frac{1}{3}$ is given in Figure~\ref{fig:2LDATriagCode-appendix}. Since any $(1, \leq 2)$-LDA code is also a $(1, \leq 2)$-LDB code, this construction provides an upper bound also for $D^B_{1,\ell}$. One may use Remark~\ref{rem density} to verify that the density of the code is indeed $\frac{1}{3}$ by observing the tiling provided in Figure~\ref{fig:2LDATriagCode-appendix}. 
        Specifically, each tile has $6$ vertices, among which $2$ are codewords.

        Next, we show that this is indeed a $(1,\leq 2)$-LDA code. Let us call this code $C$. 
        To verify, let us start with 
        $X,Y\subseteq V(\mathbf{G}_6)$ that satisfy $X \neq Y, X\cap C = Y\cap C$, and $|X|,|Y|\leq 2$. We have to show that $I(X) \neq I(Y)$. 
        Suppose the contrary, that is, $I(X) = I(Y)$. Let us try to analyze such a situation.

        Since $X \neq Y$ and $X \cap C = Y \cap C$, without loss of generality, we may assume that 
        the non-codeword $v_{0,0}\in X$ and $v_{0,0}\notin Y$.
        Thus, $\{ v_{\overline{1},0}, v_{0,1}, v_{1,\overline{1}}\} \subseteq I(X)$.
        Observe that, for any codeword $c$, we have $I(c) = \{c\}$; 
     any two adjacent non-codewords have exactly $2$ codewords in their common neighborhood, and 
    any two non-adjacent non-codewords have 
         at most $1$ codeword in their common neighborhood. 
    Thus,  $I(X) \neq I(Y)$ unless one of the three non-codeword neighbors of $v_{0,0}$ belongs to $Y$.

    Without loss of generality, 
    assume that $v_{1,0} \in Y$. 
    Notice that the codeword 
    $v_{\overline{1},0} \in I(v_{0,0}) \setminus I(v_{1,0})$ and  the codeword $v_{2,0} \in I(v_{1,0}) \setminus I(v_{0,0})$. 
    That means, if $I(X) = I(Y)$, then 
    we must have  
    $X = \{v_{0,0}, y\}$ and 
    $Y = \{v_{1,0}, z\}$  for some 
    $y \in N[v_{2,0}] $ and for some $z \in N[v_{\overline{1},0}] \setminus \{v_{0,0}\}$.
    Observe that $N[v_{\overline{1},0}]\cap N[v_{2,0}] = \emptyset$. 
    Therefore, $y \neq z$. 
    
    \sloppy
    Since both $v_{0,0}$ and $v_{1,0}$ are non-codewords,     the condition $X \cap C = Y \cap C$ implies that $y$ and $z$ are both non-codewords. 
    That means, $y \in \{v_{3,0}, v_{3,\overline{1}}, v_{2,\overline{1}}, v_{1,1}, v_{2,1}, v_{1,0}\}$ and 
    $z \in \{ v_{\overline{2},0}, v_{\overline{2},1}, v_{\overline{1},1}, v_{0,\overline{1}}, v_{\overline{1},\overline{1}} \}$. 
    Hence, at least one element of $A = \{ v_{0,\overline{2}}, v_{\overline{3},1},  v_{\overline{2},2}\}$ must be in $I(Y)$ while $A \cap I(X) = \emptyset$. This is a contradiction to the assumption 
    $I(X) = I(Y)$. Thus,
    $C$ is a $(1,\leq 2)$-LDA code.    
    \qed

        \begin{figure}[H]
        \centering
        \scalebox{.7}{\input{2LDAcodeTriangle.tex}}
        \caption{The gray vertices form a $(1,\leq2)$-LDA code with density $\frac{1}{3}$. }
        \label{fig:2LDATriagCode-appendix}
    \end{figure}

\subsection{Proof of Theorem~\ref{thm lda}(c)}\label{sec1c}
  It is enough to show that every vertex of 
    the infinite triangular grid $\mathbf{G_6}$
    must be part of any $(r, \leq 3)$-LDA code
    as any $(r,\leq \ell+1)$-LDA code is also an $(r,\leq \ell)$-LDA code. Moreover, note that if $C$ is an $(r, \leq 3)$-LDA code, then any superset $C'$ of $C$ is also an $(r, \leq 3)$-LDA code. 
    Thus, without loss of generality, if we can show that $C=V(\mathbf{G_6}) \setminus \{v_{0,0}\}$ is not an $(r, \leq 3)$-LDA code, then we can conclude the proof. 
    Therefore,  consider the code $C$ (see Figure~\ref{Fig:3LDATriangle}).     Notice that 
    $I_r(\{v_{\overline{1},1},v_{1,\overline{1}}\}) = I_r(\{v_{\overline{1},1},v_{1,\overline{1}},v_{0,0}\})$ in this case. Since $C\cap \{v_{\overline{1},1},v_{1,\overline{1}}\}= C\cap \{v_{\overline{1},1},v_{1,\overline{1}},v_{0,0}\}$, code $V(\mathbf{G_6}) \setminus \{v_{0,0}\}$ is not $(r, \leq 3)$-LDA. This concludes the proof.  \qed


    \begin{figure}[H]
        \centering
        \scalebox{0.7}{\begin{tikzpicture}
	\begin{pgfonlayer}{nodelayer}
		\node [style=Empty,fill=Gray!60] (1) at (-7, 0) {};
		\node [style=Empty,fill=Gray!60] (2) at (-6, 0) {};
		\node [style=Empty,fill=Gray!60] (3) at (-5, 0) {};
		\node [style=Empty,fill=Gray!60] (13) at (-7.5, 1) {};
		\node [style=Empty,fill=Gray!60] (14) at (-6.5, 1) {};
		\node [style=Vertex,fill=Gray!60] (15) at (-5.5, 1) {\footnotesize$v_{1,\overline{1}}$};
		\node [style=Empty,fill=Gray!60] (16) at (-4.5, 1) {};
		\node [style=Empty,fill=Gray!60] (25) at (-8, 2) {};
		\node [style=Empty,fill=Gray!60] (26) at (-7, 2) {};
		\node [style=Vertex] (27) at (-6, 2) {\footnotesize$v_{0,0}$};
		\node [style=Empty,fill=Gray!60] (28) at (-5, 2) {};
		\node [style=Empty,fill=Gray!60] (29) at (-4, 2) {};
		\node [style=Empty,fill=Gray!60] (38) at (-7.5, 3) {};
		\node [style=Vertex,fill=Gray!60] (39) at (-6.5, 3) {\footnotesize $v_{\overline{1},1}$};
		\node [style=Empty,fill=Gray!60] (40) at (-5.5, 3) {};
		\node [style=Empty,fill=Gray!60] (41) at (-4.5, 3) {};
		\node [style=Empty,fill=Gray!60] (42) at (-7, 4) {};
		\node [style=Empty,fill=Gray!60] (43) at (-6, 4) {};
		\node [style=Empty,fill=Gray!60] (44) at (-5, 4) {};
	\end{pgfonlayer}
	\begin{pgfonlayer}{edgelayer}

		\draw (13) to (26);
		\draw (26) to (39);
		\draw (40) to (27);
		\draw (27) to (14);
		\draw (14) to (1);
		\draw (1) to (2);
		\draw (2) to (3);
		\draw (16) to (15);
		\draw (15) to (14);
		\draw (13) to (14);
		\draw (13) to (25);
		\draw (25) to (26);
		\draw (26) to (27);
		\draw (27) to (28);
		\draw (29) to (28);
		\draw (41) to (40);
		\draw (40) to (39);
		\draw (39) to (38);
		\draw (29) to (16);
		\draw (16) to (3);
		\draw (3) to (15);
		\draw (15) to (27);
		\draw (27) to (39);
		\draw (40) to (28);
		\draw (28) to (16);
		\draw (2) to (15);
		\draw (15) to (28);
		\draw (28) to (41);
		\draw (41) to (29);
		\draw (2) to (14);
		\draw (14) to (26);
		\draw (26) to (38);
		\draw (38) to (25);
		\draw (13) to (1);
		\draw (38) to (42);
		\draw (42) to (39);
		\draw (39) to (43);
		\draw (43) to (40);
		\draw (40) to (44);
		\draw (44) to (41);
		\draw (44) to (43);
		\draw (43) to (42);
	\end{pgfonlayer}
\end{tikzpicture}}
        \caption{A reference figure for the proof of Theorem~\ref{thm lda}(c) where gray vertices are codewords.}
        \label{Fig:3LDATriangle}
    \end{figure}

\subsection{Proof of Theorem~\ref{thm ldb}(c) (upper bound)}\label{sec2cupp}

A $(1, \leq 3)$-LDB code of $\mathbf{G_6}$ 
        with density $\frac{4}{7}$ is given in Figure~\ref{fig:3LDBFig}. 
        One may use Remark~\ref{rem density} to verify that the density of the code is indeed $\frac{4}{7}$ by observing the tiling provided in Figure~\ref{fig:3LDBFig}. 
        Specifically, each hexagonal tile has $7$ vertices, among which $4$ are codewords.

        Next, we show that this is indeed a $(1,\leq 3)$-LDB code. Let us call this code $C$. 
        To verify, consider 
        $X,Y\subseteq V(\mathbf{G}_6)\setminus C$ that satisfy $X \neq Y$, and $|X|,|Y|\leq 3$. We have to show that $I(X) \neq I(Y)$. 
        Suppose the contrary, that is, $I(X) = I(Y)$. 

        Since $X\neq Y$, without loss of generality, suppose that the non-codeword $v_{0,0}\in X$, but $v_{0,0}\notin Y$.
        Thus, $A = \{ v_{\overline{1},1}, v_{\overline{1},0}, v_{0,\overline{1}}, v_{1,\overline{1}} \} \subseteq I(X)$.
        Since $v_{0,0} \not\in Y$, and $I(X) = I(Y)$, we must have at least one vertex from each of the following vertex subsets in $Y$: 
        $A_1 = \{ v_{\overline{2},0}, v_{\overline{1},\overline{1}} \}$ (as $v_{\overline{1}, 0} \in I(Y)$),  
        $A_2 = \{ v_{\overline{2},2}, v_{0,1} \}$
        (as $v_{\overline{1}, 1} \in I(Y)$), 
        $A_3 = \{ v_{1,0}, v_{1,\overline{2}} \}$
        (as $v_{1, \overline{1}} \in I(Y)$). 
        That forces $|Y| = 3$, 
        and $Y \subseteq A_1 \cup A_2 \cup A_3$. 
        In particular, that implies that $I(X)$ cannot contain any codeword not adjacent to the vertices of $A_1 \cup A_2 \cup A_3$ due to our assumption $I(X) = I(Y)$. 
        Observe that any non-codeword $x \not\in A_1 \cup A_2 \cup A_3 \cup \{v_{0,0}\}$ has a codeword neighbor that is not adjacent to any vertex of  $A_1 \cup A_2 \cup A_3$. That means, $X \subseteq A_1 \cup A_2 \cup A_3 \cup \{v_{0,0}\}$. 
        Further observe that, 
        each vertex $x \in A_1 \cup A_2 \cup A_3$ has a codeword neighbor that is not adjacent to any 
        vertex of $(A_1 \cup A_2 \cup A_3 \cup \{v_{0,0}\}) \setminus \{x\}$. 
        Hence, if a vertex $x \in A_1 \cup A_2 \cup A_3$ belongs to $Y$, then it must also belong to $X$. This implies $Y \subseteq X$ forcing $|X| \geq |Y \cup \{v_{0,0}\}| \geq 3+1 = 4$, a contradiction.\qed

    \begin{figure}[H]
        \centering
        \scalebox{.8}{\input{3LDBTriangle}}
        \caption{The gray vertices form a $(1,\leq3)$-LDB code with density $\frac{4}{7}$.}
        \label{fig:3LDBFig}
    \end{figure}

\subsection{Proof of Theorem~\ref{thm ldb}(d) (upper bound)}\label{sec2dupp}
 A $(1, \leq 4)$-LDB code of $\mathbf{G_6}$ 
        with density $\frac{2}{3}$ is given in Figure~\ref{fig:4LDBcodeTriangle}. 
        One may use Remark~\ref{rem density} to verify that the density of the code is indeed $\frac{2}{3}$ by observing the tiling provided in Figure~\ref{fig:4LDBcodeTriangle}. 
        Specifically, each tile has $6$ vertices, among which $4$ are codewords.
        
        The given construction is a $(1, \leq 4)$-LDB code since each non-codeword is adjacent to a codeword which is adjacent to no other non-codewords. For example, if we have $v_{0,0}\in I(X)$, then we immediately know that $v_{0,1}\in X$.
        This establishes the upper bound $D^B_{1,4} \leq \frac{2}{3}$. 
        \qed

\begin{figure}[H]
        \centering
        \scalebox{.8}{\input{4LDBcodeTriangle}}
        \caption{The gray vertices form a $(1,\leq4)$-LDB code with density $\frac{2}{3}$.}
        \label{fig:4LDBcodeTriangle}
    \end{figure}

\subsection{A general discharging method for proving the lower bounds} \label{subsection general technique} 
In this section, we establish the lower bound proof technique which we use to show the lower bounds of Theorems~\ref{thm ldb}(b), (c), and (d). To avoid duplicating certain specifics for those proofs, we are presenting a general overview of them here. Our proofs are based on the discharging method. An interested reader may see~\cite{cranston2017introduction} to read more on the discharging method.

Let $C$ be a $(1, \leq \ell)$-LDB code of $\mathbf{G_6}$.  We wish to establish a lower bound $D^B_{1,\ell} \geq \alpha$, for some $\alpha \in (0, 1)$. The proof strategy is as follows. Initially, we assign a charge function
to the vertices of $\mathbf{G_6}$ as follows:
$$
ch(x) =\begin{cases}
			1, & \text{if } x\in C\\
            0, & \text{if } x \in V(\mathbf{G_6}) \setminus C.
		 \end{cases}
$$

In the next step, we define certain \textit{discharging rules} and relocate the charges. In this way, the total charge of the graph is conserved. 
We use the following  discharging rules in each of our proofs. Thus, let us present the discharging rules here as a common step of the proofs.

\begin{itemize}
    \item[(R1)] Each codeword $c \in C$ with $|I(c)|<7$ donates a charge of $\frac{1}{7-|I(c)|}$ to its non-codeword neighbors. 
    The updated charge of a vertex $x$ after applying (R1) is denoted by $ch_1(x)$. 

    \item[(R2)] Suppose a non-codeword $v$ has charge $ch_1(v) = \frac{\alpha}{1 - \alpha}+\beta$, for some $\beta > 0$, after (R1). If $v$ has $t>0$
    non-codeword neighbors 
    $u_1, u_2, \dots, u_t$ with $ch_1(u_i) < \frac{\alpha}{1 - \alpha}$, for all  $i \in \{1,2, \ldots, t\}$, then $v$ donates a charge of $\frac{\beta}{t}$ to each of them.  The updated charge of the vertex $x$, after applying (R2), is denoted by $ch_2(x)$. 
\end{itemize}
Note that we apply rule (R1) simultaneously to all the vertices. After that, we apply rule (R2) to every vertex simultaneously.

Suppose that after applying (R1) and (R2) we have 
$$
ch_2(x) \geq \begin{cases}
			0, & \text{if } x\in C\\
            \frac{\alpha}{1 - \alpha}, & \text{if } x \in V(\mathbf{G_6}) \setminus C.
		 \end{cases}
$$
By Lemma~\ref{lem lower bound}, this will prove that the density of $C$ is at least $\alpha$, and hence prove the lower bound $D^B_{1,\ell} \geq \alpha$.

Let $c$ be a codeword, that is, $c \in C$. 
If $c$ has $k \geq 1$ non-codewords in its neighborhood, then $c$ donates a total charge of $$k \times \frac{1}{7-|I(c)|} = k \times \frac{1}{7 - (7-k)} = 1$$ 
        due to (R1). That means, $ch_1(c) \geq 1-1 =0$. 
        Moreover, application of (R2) does not change the charge of any codewords. 
        This implies that after applying (R1) and (R2), the updated charge of each codeword $c$ is $ch_1(c) = ch_2(c) \geq 0$.

Furthermore, let $u$ be a non-codeword. Notice that, if $ch_1(u) \geq \frac{\alpha}{1 - \alpha}$ then, 
even after applying (R2), we will still have 
$ch_2(u) \geq \frac{\alpha}{1 - \alpha}$. 

\begin{lemma}\label{lem lower bound}
If $ch_2(u) \geq \frac{\alpha}{1 - \alpha}$, for all $u \in V(\mathbf{G_6}) \setminus C$, 
then $D^B_{1,\ell} \geq \alpha$. 
\end{lemma}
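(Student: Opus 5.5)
The plan is a standard averaging argument over large balls, using conservation of charge up to boundary effects. Fix an arbitrary $(1,\leq \ell)$-LDB code $C$; it suffices to show $D(C)\geq\alpha$. Write $B_n=B_n(v_{0,0})$, $c_n=|C\cap B_n|$ and $|B_n|=3n^2+3n+1$. The total initial charge in $B_n$ is exactly $c_n$.

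The first step controls how much charge crosses the boundary of $B_n$. Under (R1), charge moves only from a codeword to an adjacent non-codeword. Under (R2), it moves only from a non-codeword to an adjacent non-codeword. So every vertex's final charge $ch_2(x)$ depends only on the initial charges of vertices within distance $2$ of $x$, and charge moves at most distance $2$ in total. The charge sent along any single edge in one rule is bounded by a constant:
\begin{itemize}
\item under (R1), each codeword donates at most $1$ in total;
\item under (R2), a non-codeword donates at most $ch_1(v)\leq 6$.
\end{itemize}
Hence
$$\sum_{x\in B_n} ch_2(x)\;\leq\; \sum_{x\in B_{n+2}} ch(x) \;=\; c_{n+2},$$
because any charge ending in $B_n$ originated in $B_{n+2}$ and charge is never created. (If needed, the constant can be absorbed directly as $O(n)$ instead.)

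The second step applies the hypothesis. Codewords have $ch_2\geq 0$, as shown just before the lemma, and non-codewords have $ch_2\geq \frac{\alpha}{1-\alpha}$ by assumption. Therefore
$$c_{n+2}\;\geq\;\frac{\alpha}{1-\alpha}\,\bigl(|B_n|-c_n\bigr)\;\geq\;\frac{\alpha}{1-\alpha}\,\bigl(|B_n|-c_{n+2}\bigr).$$
Rearranging gives $c_{n+2}\geq \alpha|B_n|$, so
$$\frac{c_{n+2}}{|B_{n+2}|}\;\geq\;\alpha\,\frac{|B_n|}{|B_{n+2}|}\;\longrightarrow\;\alpha,$$
since $|B_n|/|B_{n+2}|\to 1$ by the quadratic growth of the balls. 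Taking $\limsup$ yields $D(C)\geq\alpha$. Since $C$ was an arbitrary $(1,\leq\ell)$-LDB code, $D^B_{1,\ell}\geq\alpha$.

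The main point needing care is the boundary/locality claim: that (R2) does not cascade, since it is applied once and simultaneously, so the total transport distance is at most $2$. One could worry that $ch_1$ might be large, but that does not matter, because only the containment of charge sources in $B_{n+2}$ is used and no bound on the amount transported is needed.
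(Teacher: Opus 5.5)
Your proof is correct and is essentially the paper's argument: charge travels at most distance $2$ and no donor ever sends more than it holds, so the total final charge on $B_n$ is at most $|C\cap B_{n+2}|$, which yields $|C\cap B_{n+2}|\ge\alpha|B_n|$. This is the paper's inequality $|C\cap B_n(v_{0,0})|\ge\alpha|B_{n-2}(v_{0,0})|$ with the index shifted, and your explicit remark that donations never exceed the donor's charge is a detail the paper leaves implicit. One small point: your phrasing that $ch_2(x)$ ``depends only on'' initial charges within distance $2$ is loose, since the transferred amounts depend on the configuration farther out, but only the claim about where the charge originates is used, so the argument is unaffected.
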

\begin{proof}
    Observe that we move charge to at most distance 2 from a codeword. Hence, when $ch_2(u) \geq \frac{\alpha}{1 - \alpha}$, for all $u \in V(\mathbf{G_6}) \setminus C$, we obtain
    $|C\cap B_n(v_{0,0})| \geq \frac{\alpha}{1-\alpha}(|B_{n-2}(v_{0,0})|-|C \cap B_{n-2}(v_{0,0})|) \geq \frac{\alpha}{1-\alpha}(|B_{n-2}(v_{0,0})|-|C\cap B_n(v_{0,0})|)$. This implies $|C\cap B_n(v_{0,0})|\geq |B_{n-2}(v_{0,0})|\alpha$. Therefore, we have $$\limsup_{n \rightarrow \infty} \frac{|C \cap B_n(v_{0,0})|}{|B_n(v_{0,0})|}\geq \limsup_{n \rightarrow \infty}\frac{|B_{n-2}(v_{0,0})|\alpha}{|B_n(v_{0,0})|}=\alpha \limsup_{n \rightarrow \infty}\frac{|B_{n-2}(v_{0,0})|}{|B_n(v_{0,0})|}=\alpha. \hspace*{\fill}\qedhere$$
\end{proof}

\begin{remark}
    Notice that, the above process and Lemma~\ref{lem lower bound} can be generalized further by changing the discharging rules. Moreover, it is possible to replace $\mathbf{G_6}$ by a square, hexagonal, or a King's grid keeping the method the same (as long as value 7 in rule (R1) is chosen well). 
    Furthermore, the technique can be readily extended to prove lower bounds of the minimum density of
    $(r, \leq \ell)$-LDA and $(r, \leq \ell)$-LDB codes for general values of $r$ and $\ell$. Similar discharging idea has been used in~\cite{das2026density}.  
    
    Another similar concept is share, introduced in~\cite{slater2002fault}, which has been used widely in lower bound proofs of densities. However, for us share did not seem to work as well as our own approach which a bit surprisingly is working in every case using the same framework of two rules.
\end{remark}

\subsection{Proof of Theorems~\ref{thm lda}(b) and~\ref{thm ldb}(b) (lower bound)}\label{prf ldb}
In this section, we show that $D^A_{1,2}\geq D^B_{1,2}\geq\frac{1}{3}$.
Recall that, any $(1,\leq 2)$-LDA code is, in particular, a  $(1,\leq 2)$-LDB code. Thus, it is enough to prove that any $(1,\leq 2)$-LDB code has density at least $\frac{1}{3}$. 
It is enough to show that 
     $ch_2(v_{0,0}) \geq \frac{1}{2}=(1/3)/(2/3)$ assuming $v_{0,0}$ is a non-codeword due to Lemma~\ref{lem lower bound} along with the discussion in Section~\ref{subsection general technique}.

       Hence, let us calculate $ch_2(u)$ for  each 
       non-codeword $u$  based on how many adjacent codewords they have, and their positions. 
       For this part, without loss of generality, we will assume $v_{0,0}$ to be a non-codeword, and consider all the cases under this assumption. That is, under different scenarios, we want to show that $ch_2(v_{0,0}) \geq \frac{1}{2}$, which will conclude the proof. 

       \noindent\textbf{Case 1: $\mathbf{|I(v_{0,0})| \geq 3}$}.   
       Note that, every codeword donates a charge of 
       at least $\frac{1}{6}$ to $v_{0,0}$. Thus,  $ch_1(v_{0,0}) \geq 3 \times \frac{1}{6} = \frac{1}{2}$ as $|I(v_{0,0})| \geq 3$.
       Therefore, $ch_2(v_{0,0}) \geq \frac{1}{2}$. 

\medskip 
       
       \noindent\textbf{Case 2: $\mathbf{|I(v_{0,0})| = 2}$.}  Suppose that $|N(v_{0,0}) \cap C| = 2$.
       Observe that, the two codeword neighbors of $v_{0,0}$ can be at distance $1$, $2$ or $3$ in $\mathbf{G_6}[N(v_{0,0})]$, that is, the $6$-cycle induced by the neighbors  of $v_{0,0}$.
       
\medskip
       
 \noindent \textbf{Subcase 2.1: the two codeword neighbors of $\mathbf{v_{0,0}}$ are at distance $\mathbf{1}$ in $\mathbf{\mathbf{G_6}[N(v_{0,0})]}$.}      Without loss of generality, suppose that the two codeword neighbors of $v_{0,0}$ are $v_{0,1}$ and $v_{1,0}$ (depicted in Figure~\ref{fig 3ldb}(a) for reference). That is, capturing the scenario when the two codeword neighbors are at distance $1$ in 
       $\mathbf{G_6}[N(v_{0,0})]$.
       In this scenario, 
       $I(\{ v_{0,0},v_{1,1} \})=I(v_{1,1})$, and thus, $v_{1,1}$ must be a codeword. That implies $|I(v_{0,1})|, |I(v_{1,0})| \geq 3$. Therefore, both $v_{0,1}$ and $v_{1,0}$ donate 
       $\frac{1}{4}$ each to $v_{0,0}$. Thus, $ch_1(v_{0,0}) \geq \frac{1}{2}$, which implies $ch_2(v_{0,0}) \geq \frac{1}{2}$.

       \begin{figure}[h]
            \centering
            \begin{subfigure}[b]{0.4\textwidth}
                \centering
                \scalebox{.8}{\begin{tikzpicture}
	\begin{pgfonlayer}{nodelayer}
		

\node[style=Vertex] (v-2-2) at (-1.5,-2) {\footnotesize$v_{\overline{1},\overline{2}}$};
\node[style=Vertex] (v-1-2) at (-0.5,-2) {\footnotesize$v_{0,\overline{2}}$};
\node[style=Vertex] (v0-2)  at ( 0.5,-2) {\footnotesize$v_{1,\overline{2}}$};
\node[style=Vertex] (v1-2)  at ( 1.5,-2) {\footnotesize$v_{2,\overline{2}}$};
\node[style=Vertex] (v2-2)  at ( 2.5,-2) {\footnotesize$v_{3,\overline{2}}$};

\node[style=Vertex] (v-2-1) at (-2,-1) {\footnotesize$v_{\overline{2},\overline{1}}$};
\node[style=Vertex] (v-1-1) at (-1,-1) {\footnotesize$v_{\overline{1},\overline{1}}$};
\node[style=Vertex] (v0-1)  at ( 0,-1) {\footnotesize$v_{0,\overline{1}}$};
\node[style=Vertex] (v1-1)  at ( 1,-1) {\footnotesize$v_{1,\overline{1}}$};
\node[style=Vertex] (v2-1)  at ( 2,-1) {\footnotesize$v_{2,\overline{1}}$};

\node[style=Vertex] (v-20) at (-1.5,0) {\footnotesize$v_{\overline{2},0}$};
\node[style=Vertex] (v-10) at (-0.5,0) {\footnotesize$v_{\overline{1},0}$};
\node[style=Vertex] (v00)   at ( 0.5,0) {\footnotesize$v_{0,0}$};
\node[style=Vertex, fill=Gray!60] (v10)   at ( 1.5,0) {\footnotesize$v_{1,0}$};
\node[style=Vertex] (v20)   at ( 2.5,0) {\footnotesize$v_{2,0}$};

\node[style=Vertex] (v-21) at (-2,1) {\footnotesize$v_{\overline{3},1}$};
\node[style=Vertex] (v-11) at (-1,1) {\footnotesize$v_{\overline{2},1}$};
\node[style=Vertex] (v01)   at ( 0,1) {\footnotesize$v_{\overline{1},1}$};
\node[style=Vertex,fill=Gray!60] (v11)   at ( 1,1) {\footnotesize$v_{0,1}$};
\node[style=Vertex,fill=Gray!60] (v21)   at ( 2,1) {\footnotesize$v_{1,1}$};

\node[style=Vertex] (v-22) at (-1.5,2) {\footnotesize$v_{\overline{3},2}$};
\node[style=Vertex] (v-12) at (-0.5,2) {\footnotesize$v_{\overline{2},2}$};
\node[style=Vertex] (v02)   at ( 0.5,2) {\footnotesize$v_{\overline{1},2}$};
\node[style=Vertex] (v12)   at ( 1.5,2) {\footnotesize$v_{0,2}$};
\node[style=Vertex] (v22)   at ( 2.5,2) {\footnotesize$v_{1,2}$};

        
	\end{pgfonlayer}
    
	\begin{pgfonlayer}{edgelayer}

        \foreach \j in {-2,...,2}{
            \foreach \i in {-2,...,1}{ 
                \draw (v\i\j) -- (v\the\numexpr\i+1\relax\j);
            }
        }

        \foreach \j in {-1,1}{
            \foreach \i in {-2,...,2}{
                \draw (v\i\j) -- (v\i\the\numexpr\j+1\relax);
            }
            \foreach \i in {-1,...,2}{ 
                \draw (v\i\j) -- (v\the\numexpr\i-1\relax\the\numexpr\j+1\relax);
            }
        }
        
        \foreach \j in {-2,0}{
            \foreach \i in {-2,...,2}{
                \draw (v\i\j) -- (v\i\the\numexpr\j+1\relax);
            }
            \foreach \i in {-2,...,1}{ 
                \draw (v\i\j) -- (v\the\numexpr\i+1\relax\the\numexpr\j+1\relax);
            }
        }


	\end{pgfonlayer}
\end{tikzpicture}}
                \caption{}
            \end{subfigure}
            \hfill
            \begin{subfigure}[b]{0.4\textwidth}
                \centering
                \scalebox{.8}{\begin{tikzpicture}
	\begin{pgfonlayer}{nodelayer}
		

\node[style=Vertex] (v-2-2) at (-1.5,-2) {\footnotesize$v_{\overline{1},\overline{2}}$};
\node[style=Vertex] (v-1-2) at (-0.5,-2) {\footnotesize$v_{0,\overline{2}}$};
\node[style=Vertex] (v0-2)  at ( 0.5,-2) {\footnotesize$v_{1,\overline{2}}$};
\node[style=Vertex] (v1-2)  at ( 1.5,-2) {\footnotesize$v_{2,\overline{2}}$};
\node[style=Vertex] (v2-2)  at ( 2.5,-2) {\footnotesize$v_{3,\overline{2}}$};

\node[style=Vertex] (v-2-1) at (-2,-1) {\footnotesize$v_{\overline{2},\overline{1}}$};
\node[style=Vertex] (v-1-1) at (-1,-1) {\footnotesize$v_{\overline{1},\overline{1}}$};
\node[style=Vertex] (v0-1)  at ( 0,-1) {\footnotesize$v_{0,\overline{1}}$};
\node[style=Vertex] (v1-1)  at ( 1,-1) {\footnotesize$v_{1,\overline{1}}$};
\node[style=Vertex] (v2-1)  at ( 2,-1) {\footnotesize$v_{2,\overline{1}}$};

\node[style=Vertex] (v-20) at (-1.5,0) {\footnotesize$v_{\overline{2},0}$};
\node[style=Vertex] (v-10) at (-0.5,0) {\footnotesize$v_{\overline{1},0}$};
\node[style=Vertex] (v00)   at ( 0.5,0) {\footnotesize$v_{0,0}$};
\node[style=Vertex, fill=Gray!60] (v10)   at ( 1.5,0) {\footnotesize$v_{1,0}$};
\node[style=Vertex] (v20)   at ( 2.5,0) {\footnotesize$v_{2,0}$};

\node[style=Vertex] (v-21) at (-2,1) {\footnotesize$v_{\overline{3},1}$};
\node[style=Vertex] (v-11) at (-1,1) {\footnotesize$v_{\overline{2},1}$};
\node[style=Vertex,fill=Gray!60] (v01)   at ( 0,1) {\footnotesize$v_{\overline{1},1}$};
\node[style=Vertex,fill=Gray!60] (v11)   at ( 1,1) {\footnotesize$v_{0,1}$};
\node[style=Vertex] (v21)   at ( 2,1) {\footnotesize$v_{1,1}$};

\node[style=Vertex] (v-22) at (-1.5,2) {\footnotesize$v_{\overline{3},2}$};
\node[style=Vertex] (v-12) at (-0.5,2) {\footnotesize$v_{\overline{2},2}$};
\node[style=Vertex] (v02)   at ( 0.5,2) {\footnotesize$v_{\overline{1},2}$};
\node[style=Vertex] (v12)   at ( 1.5,2) {\footnotesize$v_{0,2}$};
\node[style=Vertex] (v22)   at ( 2.5,2) {\footnotesize$v_{1,2}$};

        
	\end{pgfonlayer}
    
	\begin{pgfonlayer}{edgelayer}

        \foreach \j in {-2,...,2}{
            \foreach \i in {-2,...,1}{ 
                \draw (v\i\j) -- (v\the\numexpr\i+1\relax\j);
            }
        }

        \foreach \j in {-1,1}{
            \foreach \i in {-2,...,2}{
                \draw (v\i\j) -- (v\i\the\numexpr\j+1\relax);
            }
            \foreach \i in {-1,...,2}{ 
                \draw (v\i\j) -- (v\the\numexpr\i-1\relax\the\numexpr\j+1\relax);
            }
        }
        
        \foreach \j in {-2,0}{
            \foreach \i in {-2,...,2}{
                \draw (v\i\j) -- (v\i\the\numexpr\j+1\relax);
            }
            \foreach \i in {-2,...,1}{ 
                \draw (v\i\j) -- (v\the\numexpr\i+1\relax\the\numexpr\j+1\relax);
            }
        }


	\end{pgfonlayer}
\end{tikzpicture}}
                \caption{}
            \end{subfigure}
            \begin{subfigure}[b]{0.4\textwidth}
            \centering
            \scalebox{.8}{\begin{tikzpicture}
	\begin{pgfonlayer}{nodelayer}
		

\node[style=Vertex] (v-2-2) at (-1.5,-2) {\footnotesize$v_{\overline{1},\overline{2}}$};
\node[style=Vertex] (v-1-2) at (-0.5,-2) {\footnotesize$v_{0,\overline{2}}$};
\node[style=Vertex] (v0-2)  at ( 0.5,-2) {\footnotesize$v_{1,\overline{2}}$};
\node[style=Vertex] (v1-2)  at ( 1.5,-2) {\footnotesize$v_{2,\overline{2}}$};
\node[style=Vertex] (v2-2)  at ( 2.5,-2) {\footnotesize$v_{3,\overline{2}}$};

\node[style=Vertex] (v-2-1) at (-2,-1) {\footnotesize$v_{\overline{2},\overline{1}}$};
\node[style=Vertex] (v-1-1) at (-1,-1) {\footnotesize$v_{\overline{1},\overline{1}}$};
\node[style=Vertex] (v0-1)  at ( 0,-1) {\footnotesize$v_{0,\overline{1}}$};
\node[style=Vertex] (v1-1)  at ( 1,-1) {\footnotesize$v_{1,\overline{1}}$};
\node[style=Vertex] (v2-1)  at ( 2,-1) {\footnotesize$v_{2,\overline{1}}$};

\node[style=Vertex] (v-20) at (-1.5,0) {\footnotesize$v_{\overline{2},0}$};
\node[style=Vertex, fill=Gray!60] (v-10) at (-0.5,0) {\footnotesize$v_{\overline{1},0}$};
\node[style=Vertex] (v00)   at ( 0.5,0) {\footnotesize$v_{0,0}$};
\node[style=Vertex, fill=Gray!60] (v10)   at ( 1.5,0) {\footnotesize$v_{1,0}$};
\node[style=Vertex] (v20)   at ( 2.5,0) {\footnotesize$v_{2,0}$};

\node[style=Vertex] (v-21) at (-2,1) {\footnotesize$v_{\overline{3},1}$};
\node[style=Vertex] (v-11) at (-1,1) {\footnotesize$v_{\overline{2},1}$};
\node[style=Vertex] (v01)   at ( 0,1) {\footnotesize$v_{\overline{1},1}$};
\node[style=Vertex] (v11)   at ( 1,1) {\footnotesize$v_{0,1}$};
\node[style=Vertex] (v21)   at ( 2,1) {\footnotesize$v_{1,1}$};

\node[style=Vertex] (v-22) at (-1.5,2) {\footnotesize$v_{\overline{3},2}$};
\node[style=Vertex] (v-12) at (-0.5,2) {\footnotesize$v_{\overline{2},2}$};
\node[style=Vertex] (v02)   at ( 0.5,2) {\footnotesize$v_{\overline{1},2}$};
\node[style=Vertex] (v12)   at ( 1.5,2) {\footnotesize$v_{0,2}$};
\node[style=Vertex] (v22)   at ( 2.5,2) {\footnotesize$v_{1,2}$};

        
	\end{pgfonlayer}
    
	\begin{pgfonlayer}{edgelayer}

        \foreach \j in {-2,...,2}{
            \foreach \i in {-2,...,1}{ 
                \draw (v\i\j) -- (v\the\numexpr\i+1\relax\j);
            }
        }

        \foreach \j in {-1,1}{
            \foreach \i in {-2,...,2}{
                \draw (v\i\j) -- (v\i\the\numexpr\j+1\relax);
            }
            \foreach \i in {-1,...,2}{ 
                \draw (v\i\j) -- (v\the\numexpr\i-1\relax\the\numexpr\j+1\relax);
            }
        }
        
        \foreach \j in {-2,0}{
            \foreach \i in {-2,...,2}{
                \draw (v\i\j) -- (v\i\the\numexpr\j+1\relax);
            }
            \foreach \i in {-2,...,1}{ 
                \draw (v\i\j) -- (v\the\numexpr\i+1\relax\the\numexpr\j+1\relax);
            }
        }


	\end{pgfonlayer}
\end{tikzpicture}}
            \caption{}
        \end{subfigure}
        \hfill
        \begin{subfigure}[b]{0.4\textwidth}
            \centering
            \scalebox{.8}{\begin{tikzpicture}
	\begin{pgfonlayer}{nodelayer}
		

\node[style=Vertex] (v-2-2) at (-1.5,-2) {\footnotesize$v_{\overline{1},\overline{2}}$};
\node[style=Vertex] (v-1-2) at (-0.5,-2) {\footnotesize$v_{0,\overline{2}}$};
\node[style=Vertex] (v0-2)  at ( 0.5,-2) {\footnotesize$v_{1,\overline{2}}$};
\node[style=Vertex] (v1-2)  at ( 1.5,-2) {\footnotesize$v_{2,\overline{2}}$};
\node[style=Vertex] (v2-2)  at ( 2.5,-2) {\footnotesize$v_{3,\overline{2}}$};

\node[style=Vertex] (v-2-1) at (-2,-1) {\footnotesize$v_{\overline{2},\overline{1}}$};
\node[style=Vertex] (v-1-1) at (-1,-1) {\footnotesize$v_{\overline{1},\overline{1}}$};
\node[style=Vertex] (v0-1)  at ( 0,-1) {\footnotesize$v_{0,\overline{1}}$};
\node[style=Vertex] (v1-1)  at ( 1,-1) {\footnotesize$v_{1,\overline{1}}$};
\node[style=Vertex] (v2-1)  at ( 2,-1) {\footnotesize$v_{2,\overline{1}}$};

\node[style=Vertex] (v-20) at (-1.5,0) {\footnotesize$v_{\overline{2},0}$};
\node[style=Vertex, fill=Gray!60] (v-10) at (-0.5,0) {\footnotesize$v_{\overline{1},0}$};
\node[style=Vertex] (v00)   at ( 0.5,0) {\footnotesize$v_{0,0}$};
\node[style=Vertex, fill=Gray!60] (v10)   at ( 1.5,0) {\footnotesize$v_{1,0}$};
\node[style=Vertex] (v20)   at ( 2.5,0) {\footnotesize$v_{2,0}$};

\node[style=Vertex] (v-21) at (-2,1) {\footnotesize$v_{\overline{3},1}$};
\node[style=Vertex] (v-11) at (-1,1) {\footnotesize$v_{\overline{2},1}$};
\node[style=Vertex] (v01)   at ( 0,1) {\footnotesize$v_{\overline{1},1}$};
\node[style=Vertex] (v11)   at ( 1,1) {\footnotesize$v_{0,1}$};
\node[style=Vertex] (v21)   at ( 2,1) {\footnotesize$v_{1,1}$};

\node[style=Vertex] (v-22) at (-1.5,2) {\footnotesize$v_{\overline{3},2}$};
\node[style=Vertex] (v-12) at (-0.5,2) {\footnotesize$v_{\overline{2},2}$};
\node[style=Vertex,fill=Gray!60] (v02)   at ( 0.5,2) {\footnotesize$v_{\overline{1},2}$};
\node[style=Vertex,fill=Gray!60] (v12)   at ( 1.5,2) {\footnotesize$v_{0,2}$};
\node[style=Vertex] (v22)   at ( 2.5,2) {\footnotesize$v_{1,2}$};

        
	\end{pgfonlayer}
    
	\begin{pgfonlayer}{edgelayer}

        \foreach \j in {-2,...,2}{
            \foreach \i in {-2,...,1}{ 
                \draw (v\i\j) -- (v\the\numexpr\i+1\relax\j);
            }
        }

        \foreach \j in {-1,1}{
            \foreach \i in {-2,...,2}{
                \draw (v\i\j) -- (v\i\the\numexpr\j+1\relax);
            }
            \foreach \i in {-1,...,2}{ 
                \draw (v\i\j) -- (v\the\numexpr\i-1\relax\the\numexpr\j+1\relax);
            }
        }
        
        \foreach \j in {-2,0}{
            \foreach \i in {-2,...,2}{
                \draw (v\i\j) -- (v\i\the\numexpr\j+1\relax);
            }
            \foreach \i in {-2,...,1}{ 
                \draw (v\i\j) -- (v\the\numexpr\i+1\relax\the\numexpr\j+1\relax);
            }
        }


	\end{pgfonlayer}
\end{tikzpicture}}
            \caption{}
        \end{subfigure}

            \caption{Reference figures for Case 2 in the proof of Theorem~\ref{thm lda}(b) and Theorem~\ref{thm ldb}(b), where gray vertices are codewords.}
            \label{fig 3ldb}
        \end{figure}

    \medskip
       
 \noindent \textbf{Subcase 2.2:  the two codeword neighbors of $\mathbf{v_{0,0}}$ are at distance $\mathbf{2}$ in $\mathbf{\mathbf{G_6}[N(v_{0,0})]}$.}
       Without loss of generality, suppose that the two codeword neighbors of $v_{0,0}$ are $v_{\overline{1},1}$ and $v_{1,0}$ (depicted in Figure~\ref{fig 3ldb}(b) for reference). 
       That is, capturing the scenario when the two codeword neighbors are at distance $2$ in $\mathbf{G_6}[N(v_{0,0})]$, without loss of generality. 
       In this scenario, 
       $I(\{ v_{0,0},v_{0,1} \})=I(v_{0,1})$, and thus, $v_{0,1}$ must be a codeword, contradicting the fact that $v_{0,0}$ has exactly two codeword neighbors. Thus, if $v_{0,0}$ has exactly two codeword neighbors, it is not possible for them to be at distance  $2$ 
       in $\mathbf{G_6}[N(v_{0,0})]$. 
       In general, we can conclude that 
       a non-codeword $u$ cannot have exactly two codeword neighbors
       at distance $2$ in 
       $\mathbf{G_6}[N(u)]$.

\medskip
       
 \noindent \textbf{Subcase 2.3:  the two codeword neighbors of $\mathbf{v_{0,0}}$ are at distance $\mathbf{3}$ in $\mathbf{\mathbf{G_6}[N(v_{0,0})]}$.}
 Finally, without loss of generality, suppose that the two codeword neighbors of $v_{0,0}$ are $v_{1,0}$ and $v_{\overline{1}, 0}$(depicted in Figure~\ref{fig 3ldb}(c) for reference). 
       That is, capturing the scenario when the two codeword neighbors are at distance $3$ in $\mathbf{G_6}[N(v_{0,0})]$, without loss of generality. We will analyze this scenario further, starting with a claim.

\medskip

\noindent\textit{Claim 1.}\quad
    If $v_{1,1}$ (resp., $v_{\overline{2},1}, v_{\overline{1},\overline{1}}, v_{2,\overline{1}}$) is a non-codeword, then $v_{0,1}$ (resp., $v_{\overline{1},1},$ $v_{0,\overline{1}},$ $v_{1,\overline{1}}$) donates a charge of 
    $\frac{1}{15}$ to $v_{0,0}$ according to (R2).

\noindent \textbf{Proof of Claim:}
    Observe that, it is enough to 
    show that if $v_{1,1}$ 
    is a non-codeword, then $v_{0,1}$ 
    donates a charge of 
    $\frac{1}{15}$ to $v_{0,0}$ according to (R2) due to symmetry. 
    Notice that, if $v_{1,1}$ is a non-codeword, 
    then $v_{0, 2}$ is a codeword, 
       as otherwise $I(\{v_{0,0}, v_{\overline{1},1}\}) = I(\{v_{0,1}, v_{\overline{1},1}\})$. Now, notice that, $v_{0,1}$ has two codeword neighbors at distance exactly $2$ in 
       $\mathbf{G_6}[N(v_{0,1})]$, unless $v_{\overline{1}, 2}$ is also a codeword. 
       Recall that, in \textbf{Subcase 2.2} we had established that 
       if a non-codeword $u$ has exactly two codeword neighbors, then they cannot be at distance exactly $2$  from each other in the $6$-cycle 
       $\mathbf{G_6}[N(u)]$. Hence,  $v_{\overline{1}, 2}$ is also a codeword. See Figure~\ref{fig 3ldb}(d) for reference. 
Therefore, $ch_1(v_{0,1}) \geq \frac{1}{5} + \frac{1}{5} + \frac{1}{6} = \frac{17}{30} = \frac{1}{2} + \frac{1}{15}$.  

Note that, $v_{\overline{1},1}$ has (at least) two codeword neighbors: $v_{\overline{1},2}$ and $v_{\overline{1},0}$. Moreover, if $v_{\overline{2}, 1}$ is a non-codeword, then 
$v_{\overline{2}, 2}$ must be a codeword as  $I(\{v_{0,0},v_{0,1}\}) =I(\{v_{\overline{1},1},v_{0,1}\})$. That means, either $v_{\overline{2}, 1}$ or 
$v_{\overline{2}, 2}$ must be a codeword. This will force $v_{\overline{1},1}$ to have at least three codeword neighbors, and thus $ch_1(v_{\overline{1},1}) \geq \frac{1}{2}$ 
due to \textbf{Case 1}. Hence, $v_{0,1}$ does not donate any charge to $v_{\overline{1},1}$ 
according to (R2). 

Next, observe that, $v_{1,1}$ has two 
codeword neighbors $v_{1,0}$ and $v_{0,2}$. 
Due to \textbf{Case 2}, it is not possible to have $|I(v_{1,0})| = 2$ since 
$v_{1,0}$ and $v_{0,2}$ are at distance $2$ in 
$\mathbf{G_6}[N(v_{1,1})]$. 
This will force $v_{1,1}$ to have at least three codeword neighbors, and thus $ch_1(v_{1,1}) \geq \frac{1}{2}$ 
due to \textbf{Case 1}.
Hence, $v_{0,1}$ does not donate any charge to $v_{1,1}$ 
according to (R2). 

That means, $v_{0,1}$ donates a charge of at least
    $\frac{1}{15}$ to $v_{0,0}$ according to (R2). 
    \hfill $\circ$   
       
\medskip

If both $v_{1,1}$ and $v_{2,\overline{1}}$ are codewords, 
then $|I(v_{1,0})| \geq 3$
and $v_{1,0}$ donates at least $\frac{1}{4}$ to $v_{0,0}$ according to (R1).

If exactly one of $v_{1,1}$ and $v_{2,\overline{1}}$ are codewords, 
then $|I(v_{1,0})| \geq 2$
and $v_{1,0}$ donates at least $\frac{1}{5}$ to $v_{0,0}$ according to (R1). 
Moreover, one of $v_{0,1}$ and $v_{1, \overline{1}}$ donates $\frac{1}{15}$ to $v_{0,0}$ according to (R2) due to Claim 1. 
That means,  $v_{1,0}, v_{0,1}, v_{1,\overline{1}}$ collectively donate a charge of at least $\frac{1}{5} + \frac{1}{15} = \frac{4}{15} > \frac{1}{4}$.

If both $v_{1,1}$ and $v_{2,\overline{1}}$ are non-codewords, 
then $|I(v_{1,0})| \geq 1$ 
and  $v_{1,0}$ donates $\frac{1}{6}$ to $v_{0,0}$ according to (R1).
Moreover, each of $v_{0,1}$ and $v_{1, \overline{1}}$ donates at least $\frac{1}{15}$ to $v_{0,0}$ according to (R2) due to Claim 1. 
That implies,  $v_{1,0}, v_{0,1}, v_{1,\overline{1}}$ collectively donate a charge of at least $\frac{1}{6} + \frac{1}{15} + \frac{1}{15} = \frac{9}{30} > \frac{1}{4}$. 

Overall, $v_{1,0}, v_{0,1}, v_{1,\overline{1}}$ collectively donate a charge of at least $\frac{1}{4}$ to $v_{0,0}$ according to (R1) and (R2). 
Similarly, $v_{\overline{1},0}, v_{\overline{1},1}, v_{0,\overline{1}}$ collectively donate a charge of at least $\frac{1}{4}$ to $v_{0,0}$ according to (R1) and (R2). 
This implies, $ch_2(v_{0,0}) \geq \frac{1}{4} +\frac{1}{4} = \frac{1}{2}$. 

\medskip

\noindent\textbf{Case 3: $\mathbf{|I(v_{0,0})| \leq 1}$.}
Note that, $I(v_{0,0})$ must be non-empty since $C$ is a dominating set in particular. Thus, without loss of generality, let $v_{1,0}$ be the only codeword neighbor of $v_{0,0}$. 
Observe that,  $I(\{v_{0,0}, v_{0,1}\}) = I(v_{0,1})$, which is not possible. \qed

\subsection{Proof of Theorem~\ref{thm ldb}(c) (lower bound)} \label{proof ldbc}
In this section, we show that $D^B_{1,3}\geq\frac{4}{7}$.
It is enough to show that 
     $ch_2(v_{0,0}) \geq \frac{4}{3} = (\frac{4}{7})/(1-\frac{4}{7})$ assuming $v_{0,0}$ is a non-codeword due to Lemma~\ref{lem lower bound} along with the discussion in Section~\ref{subsection general technique}. 

       Hence let us calculate $ch_2(u)$ for  each 
       non-codeword $u$  based on how many adjacent codewords they have, and their positions. 
       For this part, without loss of generality, we will assume $v_{0,0}$ to be a non-codeword, and consider all the cases under this assumption. That is, under different scenarios, we want to show that $ch_2(v_{0,0}) \geq \frac{4}{3}$, which will conclude the proof. 

\begin{figure}[b]
             \centering
             \begin{subfigure}[b]{0.5\textwidth}
                \centering
                \scalebox{.8}{\begin{tikzpicture}
	\begin{pgfonlayer}{nodelayer}
		\node [style=Empty] (39) at (-6.5, 3) {};
		\node [style=Empty] (40) at (-5.5, 3) {};
		\node [style=Empty] (41) at (-4.5, 3) {};
		\node [style=Empty] (42) at (-3.5, 3) {};
		\node [style=Vertex] (52) at (-6, 4) {$v_{0,0}$};
		\node [style=Vertex] (53) at (-5, 4) {$v_{1,0}$};
		\node [style=Vertex] (54) at (-4, 4) {$v_{2,0}$};
		\node [style=Empty] (64) at (-6.5, 5) {};
		\node [style=Empty] (65) at (-5.5, 5) {};
		\node [style=Empty] (66) at (-4.5, 5) {};
		\node [style=Empty] (67) at (-3.5, 5) {};
	\end{pgfonlayer}
	\begin{pgfonlayer}{edgelayer}
		\draw (39) to (52);
		\draw (52) to (65);
		\draw (66) to (53);
		\draw (53) to (40);
		\draw (42) to (41);
		\draw (41) to (40);
		\draw (40) to (39);
		\draw (52) to (53);
		\draw (54) to (53);
		\draw (67) to (66);
		\draw (66) to (65);
		\draw (65) to (64);
		\draw (64) to (52);
		\draw (52) to (40);
		\draw (41) to (54);
		\draw (54) to (67);
		\draw (42) to (54);
		\draw (54) to (66);
		\draw (41) to (53);
		\draw (53) to (65);
	\end{pgfonlayer}
\end{tikzpicture}}
                \caption{Three non-codewords in a straight line.\\$ $}
                \label{Fig:ForbidConsecutive-appendix}
             \end{subfigure}
             \hfill
             \begin{subfigure}[b]{0.4\textwidth}
                \centering
                \scalebox{.8}{\begin{tikzpicture}
	\begin{pgfonlayer}{nodelayer}
		\node [style=Empty] (27) at (-6, 2) {};
		\node [style=Vertex] (28) at (-5, 2) {$v_{0,\overline{1}}$};
		\node [style=Empty] (29) at (-4, 2) {};
		\node [style=Empty] (30) at (-3, 2) {};
		\node [style=Empty] (39) at (-6.5, 3) {};
		\node [style=Empty] (40) at (-5.5, 3) {};
		\node [style=Vertex] (41) at (-4.5, 3) {$v_{0,0}$};
		\node [style=Vertex] (42) at (-3.5, 3) {$v_{1,0}$};
		\node [style=Vertex] (52) at (-6, 4) {$v_{\overline{2},1}$};
		\node [style=Vertex] (53) at (-5, 4) {\footnotesize$v_{\overline{1},1}$};
		\node [style=Empty] (54) at (-4, 4) {};
		\node [style=Empty] (55) at (-3, 4) {};
		\node [style=Empty] (64) at (-6.5, 5) {};
		\node [style=Vertex] (65) at (-5.5, 5) {\footnotesize$v_{\overline{2},2}$};
		\node [style=Vertex] (66) at (-4.5, 5) {$v_{\overline{1},2}$};
		\node [style=Empty] (67) at (-3.5, 5) {};
	\end{pgfonlayer}
	\begin{pgfonlayer}{edgelayer}
		\draw (39) to (52);
		\draw (52) to (65);
		\draw (66) to (53);
		\draw (53) to (40);
		\draw (40) to (27);
		\draw (27) to (28);
		\draw (29) to (28);
		\draw (29) to (30);
		\draw (42) to (41);
		\draw (41) to (40);
		\draw (40) to (39);
		\draw [in=180, out=0] (52) to (53);
		\draw (54) to (53);
		\draw (54) to (55);
		\draw (67) to (66);
		\draw (66) to (65);
		\draw (65) to (64);
		\draw (55) to (42);
		\draw (42) to (29);
		\draw (27) to (39);
		\draw (64) to (52);
		\draw (52) to (40);
		\draw (40) to (28);
		\draw (55) to (67);
		\draw (28) to (41);
		\draw (41) to (54);
		\draw (54) to (67);
		\draw (30) to (42);
		\draw (42) to (54);
		\draw (54) to (66);
		\draw (41) to (29);
		\draw (41) to (53);
		\draw (53) to (65);
	\end{pgfonlayer}
\end{tikzpicture}}
                \caption{Three non-codewords in a curve.}
                \label{Fig:ForbidCorner-appendix}
             \end{subfigure}
             \caption{Reference figures for the proof of Claim 2. }
            \label{Fig:claim}
        \end{figure}

\medskip
        

        \noindent\textit{Claim 2.}\quad
            Let $C$ be a $(1,\leq \ell)$-LDB code, $\ell\geq3$, in the infinite triangular grid $\mathbf{G_6}$, then the following hold: 
            \begin{enumerate}[(a)]
                \item  There are no three distinct non-codewords $u,v,w$ such that $N[w]\subseteq N[v]\cup N[u]$.

                \item There cannot exist three non-codewords, such as $v_{0,0},v_{1,0},v_{2,0}$, in a straight line as in Figure~\ref{Fig:claim}(a).

                \item When we have the three non-codewords, such as $v_{0,\overline{1}},v_{0,0},v_{1,0}$, in a curve as in Figure \ref{Fig:claim}(b), then vertices $v_{\overline{1},1},v_{\overline{2},1},v_{\overline{1},2}$ are codewords. Moreover, if $\ell \geq 4$, then also vertex $v_{\overline{2},2}$ is a codeword. 
            \end{enumerate}
            
        \noindent\textbf{\textit{Proof of Claim:}} 
           (a) If the three non-codewords $u,v,w$ such that $N[w]\subseteq N[v]\cup N[u]$ exist, then $I(u,v)=I(w,v,u)$, a contradiction. 

\medskip
            
          (b) When we have the three non-codewords in a straight line as in Figure \ref{Fig:claim}(a), we will have $N[v_{1,0}]\subseteq N[v_{0,0}]\cup N[v_{2,0}]$ which is not possible due to the part (a) of the claim. 

          \medskip 
          
          (c) When we have the three non-codewords in a curve as in Figure \ref{Fig:claim}(b), 
          the vertices $v_{\overline{2},1}$, $v_{\overline{1},1}$, and $v_{\overline{1},2}$ are codewords, 
          as
          otherwise 
          $I(v_{0,0},v_{\overline{2},1},v_{1,0})=I(v_{\overline{2},1},v_{1,0})$,
          $I(v_{0,0},v_{\overline{1},1},v_{1,0})=I(v_{\overline{1},1},v_{1,0})$,
          or
          $I(v_{0,0},v_{\overline{1},2},v_{0,\overline{1}})=I(v_{\overline{1},2},v_{0,\overline{1}})$ 
          implying $C$ is not a $(1,\leq 3)$-LDB code. 
          Similarly, for $\ell \geq 4$, the vertex $v_{\overline{2},2}$ is a codeword, as otherwise 
          $I(v_{0,0},v_{0,\overline{1}}, v_{1,0}, v_{\overline{2},2}) = I(v_{0,\overline{1}}, v_{1,0}, v_{\overline{2},2})$ 
          implying $C$ cannot be a $(1,\leq \ell)$-LDB code for $\ell \geq 4$.
\hfill $\circ$
            
            \medskip
            
        
        
        

\medskip

        \begin{figure}[t]
            \centering
            \scalebox{.7}{\input{3LDB5Neighbours}}
            \caption{Reference figures for Cases 2 and 3 in the proof of Theorem~\ref{thm ldb}(c), where gray vertices are codewords.}
            \label{Fig:3LDB5Neighbours}
        \end{figure}

\medskip

         \noindent \textbf{Case 1: $\mathbf{|I(v_{0,0})| = 6}$.} In this case, all six neighbors of $v_{0,0}$ are codewords. Since each codeword neighbor $c$ of $v_{0,0}$ is adjacent to two other neighbors of $v_{0,0}$, it must have $|I(c)| \geq 3$. 
            Thus,  $v_{0,0}$ receives at least $\frac{1}{4}$ charge from each of its six codeword neighbors due to (R1) and its charge gets updated as $ch_1(v_{0,0}) \geq \frac{3}{2} > \frac{4}{3}$. Therefore, in particular, $ch_2(v_{0,0}) \geq \frac{4}{3}$. 

\medskip

        \noindent \textbf{Case 2: $\mathbf{|I(v_{0,0})| = 5}$.}  Let $v_{0,0}$ have exactly $5$ codeword neighbors as depicted in Figure~\ref{Fig:3LDB5Neighbours}(a). Notice that this is a general case since $\mathbf{G_6}$ is edge-transitive. Accordingly, we have assumed $v_{1,0}$ to be the only  non-codeword neighbor of $v_{0,0}$. 
        
        Observe that, either of $v_{1,1}$ or $v_{2,\overline{1}}$ must be a codeword, or else 
            we will have $I(\{v_{1,0}, v_{1,1}, v_{2, \overline{1}}\}) = I(\{v_{1,1}, v_{2, \overline{1}}\})$, a contradiction. Let us, thus, assume that $v_{1,1}$ is a codeword. This assumption is without loss of generality due to the reflectional symmetry (along $X$-axis) of the graph, and due  to the positioning of the relevant codewords with respect to $v_{0,0}$. 
            
            Furthermore, consider the following four sets of vertices: 
            $S_1 = \{ v_{0,2}, v_{\overline{1},2}, v_{\overline{2},2} \}$, 
            $S_2 = \{ v_{\overline{2},2},$ 
            $v_{\overline{2},1}, v_{\overline{2},0} \}$, 
            $S_3 = \{ v_{\overline{2},0}, v_{\overline{1},\overline{1}}, v_{0,\overline{2}} \}$ and 
            $S_4 = \{ v_{0,\overline{2}}, v_{1,\overline{2}}, v_{2,\overline{2}} \}$.  
            Note that each of the above-listed sets of vertices
            must contain at least one codeword to avoid 
            three non-codewords in a straight line due to Claim 2(b). 
            Thus, observe that, we must have at least two such vertices from $\cup_{i = 1}^{4} S_i$ as codewords, 
            each of which is adjacent to at least one codeword neighbor of $v_{0,0}$ among $N(v_{0,0}) \setminus \{v_{1, \overline{1}}\}$. 
            
            Therefore, $v_{0,0}$ has exactly $5$ codeword neighbors and among them at least two are adjacent to at least $3$ codewords, two of them are adjacent to at least $2$ codewords, and one of them is adjacent to one or more codewords. Hence, after applying Rule (R1), we have $ch_1(v_{0,0}) \geq \frac{1}{5} + \frac{2}{4} + \frac{2}{3} = \frac{41}{30} > \frac{4}{3}$. 
            Therefore, in particular, $ch_2(v_{0,0}) \geq \frac{4}{3}$.

        \medskip
            
        \noindent \textbf{Case 3: $\mathbf{|I(v_{0,0})| = 4}$.}  Since three non-codewords cannot be in a straight line
        due to Claim 2(b), we need to consider two valid subcases here based on the adjacency of the two non-codeword neighbors of $v_{0,0}$. 

\medskip
        
        \noindent \textbf{Subcase 1: the two non-codeword neighbors of $\mathbf{v_{0,0}}$ are non-adjacent.} In this subcase, depicted in  Figure~\ref{Fig:3LDB5Neighbours}(b), $v_{0,0}$ has exactly two non-codeword neighbors: $v_{1, 0}$ and $v_{0, \overline{1}}$. Here we  capture the case where the two non-codeword neighbors of $v_{0,0}$ are non-adjacent (and do not form a straight line along with $v_{0,0}$). Due to vertex transitivity of 
        $\mathbf{G_6}$, one of the non-codeword neighbors (for us, $v_{1,0}$) can be fixed without loss of generality. The choice of the second non-codeword neighbor can be either $v_{0, \overline{1}}$ or 
        $v_{\overline{1}, 1}$. Due to the reflectional symmetry along the $X$-axis, we can choose the former option without loss of generality.

         \sloppy Note that, 
         $v_{\overline{2},1}, v_{\overline{1},2}$ are codewords due to Claim 2(c). 
     Moreover, notice that, at least one of  
         $v_{\overline{1},\overline{1}}$
         and $v_{1,\overline{2}}$ is a codeword, as $I(\{ v_{\overline{1},\overline{1}}, v_{0,\overline{1}}, v_{1,\overline{2}} \}) = I(\{ v_{\overline{1},\overline{1}}, v_{1,\overline{2}} \})$. 
         Similarly, there is a codeword among 
         $v_{2,\overline{1}}$
         and $v_{1,1}$, as $I(\{v_{1,0}, v_{2,\overline{1}}, v_{1,1}\}) = I(\{v_{2,\overline{1}}, v_{1,1}\})$.

If  both $v_{1,1}$ and $v_{\overline{1}, \overline{1}}$ are codewords, then 
$|I(v_{0,1})| \geq 4$, 
$|I(v_{\overline{1},1})| \geq 5$,
$|I(v_{\overline{1},0})| \geq 4$, 
and
$|I(v_{1,\overline{1}})| \geq 1$. 
This will imply, $ch_1(v_{0,0}) \geq \frac{1}{3} + \frac{1}{2} + \frac{1}{3} + \frac{1}{6} = \frac{4}{3}$.

Notice that,   if  either $v_{1,1}$ or $v_{\overline{1}, \overline{1}}$ is a non-codeword, then
$v_{2,\overline{2}}$ is a codeword by Claim 2(c). In other words, 
if $v_{2,\overline{2}}$ is a non-codeword, then both $v_{1,1}$ and $v_{\overline{1}, \overline{1}}$ are codewords, a case already considered. 
So, we can suppose that $v_{2,\overline{2}}$ is a codeword.

If both $v_{1,{\overline{2}}}$ and $v_{2, \overline{1}}$ are codewords, then 
$|I(v_{0,1})| \geq 3$, 
$|I(v_{\overline{1},1})| \geq 5$,
$|I(v_{\overline{1},0})| \geq 3$, 
and
$|I(v_{1,\overline{1}})| \geq 4$. 
This will imply, $ch_1(v_{0,0}) \geq \frac{1}{4} + \frac{1}{2} + \frac{1}{4} + \frac{1}{3} = \frac{4}{3}$.

If exactly one of $v_{1,{\overline{2}}}$ and $v_{2, \overline{1}}$ is a codeword, then we can assume that $v_{1,\overline{1}}$ is a codeword, and $v_{1,1}$ is not a codeword. This assumption is without loss of generality due to the reflectional symmetry along the straight line joining the vertices $v_{0,0}$ and $v_{1, \overline{1}}$. However, in this scenario $v_{2, \overline{1}}$ must be a codeword. This implies, 
$|I(v_{0,1})| \geq 3$, 
$|I(v_{\overline{1},1})| \geq 5$,
$|I(v_{\overline{1},0})| \geq 4$, 
and
$|I(v_{1,\overline{1}})| \geq 3$. 
This will imply, $ch_1(v_{0,0}) \geq \frac{1}{4} + \frac{1}{2} + \frac{1}{3} + \frac{1}{4} = \frac{4}{3}$.

That means, in this subcase we have $ch_1(v_{0,0}) \geq \frac{4}{3}$, and thus, $ch_2(v_{0,0}) \geq \frac{4}{3}$.

\medskip 

        \noindent \textbf{Subcase 2: the two non-codeword neighbors of $\mathbf{v_{0,0}}$ are adjacent.}
        In this subcase, depicted in  Figure~\ref{Fig:3LDB5Neighbours}(c), $v_{0,0}$ has exactly two non-codeword neighbors: $v_{0, 1}$ and $v_{1, 0}$.         
        Here we capture the case where the two non-codeword neighbors of $v_{0,0}$ are adjacent. 
        Since $\mathbf{G_6}$ is also triangle-transitive, 
        we can choose any 
        two adjacent neighbors of $v_{0,0}$ 
        without loss of generality.

        \sloppy Notice that, 
        $v_{\overline{1},2}, v_{0,2},v_{2,0},v_{2,\overline{1}}$ are all codewords to avoid three non-codewords in straight line due to Claim 2(b).  
        Furthermore, one among $v_{\overline{2}, 1}$ and $v_{1, \overline{2}}$ must be a codeword, as $I(\{ v_{\overline{2}, 1}, v_{1, \overline{2}}, v_{0,0} \}) = I(\{ v_{\overline{2}, 1}, v_{1, \overline{2}} \})$. Let $L$ denote the line through the points $(1,1)$ and $(0,0)$ in $\mathbb{R}^2$. Due to the reflectional symmetry along $L$ of the embedding of $\mathbf{G_6}$ in $\mathbb{R}^2$, without loss of generality, we may assume that $v_{\overline{2}, 1}$ is a codeword. 

        Suppose $v_{1,1}$ is a non-codeword. Then  
        $v_{\overline{2},2}$ must be a codeword as $v_{0,0}, v_{0,1}$ and $v_{1,1}$ are in a curve, due to Claim 2(c). Similarly, 
        $v_{2, \overline{2}}$ must be a codeword as $v_{0,0}, v_{1,0}$ and $v_{1,1}$ are in a curve, due to Claim 2(c). 
        In this scenario, 
        $|I(v_{\overline{1},1})| \geq 5$, 
        $|I(v_{\overline{1},0})| \geq 4$,
        $|I(v_{0,\overline{1}})| \geq 3$,
        $|I(v_{1,\overline{1}})| \geq 4$.
       Hence, 
       $ch_1(v_{0,0}) \geq \frac{1}{2} + \frac{1}{3} + \frac{1}{4} + \frac{1}{3} = \frac{17}{12} > \frac{4}{3}$. 
       That means, if $v_{1,1}$ is a non-codeword, then $ch_2(v_{0,0}) \geq \frac{4}{3}$. 

       Thus, henceforth for this case, we may assume that 
$v_{1,1}$ is a codeword. 
Let us next consider the following sets of vertices:
$S_1 = \{ v_{\overline{2},0}, v_{\overline{1},\overline{1}}, v_{0,\overline{2}} \}$ and $S_2 = \{ v_{0,\overline{2}}, v_{1,\overline{2}}, v_{2,\overline{2}} \}$. 
Note that both of the above-listed sets of vertices must contain at least one codeword to avoid three non-codewords
in a straight line due to Claim 2(b). Also notice that, if at least two among the five vertices from $S_1 \cup S_2$ are codewords, then 
it will ensure $ch_1(v_{0,0}) \geq \frac{4}{3}$, which in turn will imply, $ch_2(v_{0,0}) \geq \frac{4}{3}$. 
If that does not happen, that is, if at least two 
among the five vertices from $S_1 \cup S_2$ are not codewords, then we must have the following situation: 
$v_{0,\overline{2}}$ is a codeword and 
$v_{\overline{2},0}, v_{\overline{1},\overline{1}}, v_{1,\overline{2}}, v_{2,\overline{2}}$ are non-codewords. Next, let us next analyze this situation.

Note that,  $v_{1,2}$ must be a codeword, as 
$I(\{v_{\overline{2},2}, v_{0,1}, v_{1,2}\}) = I(\{v_{\overline{2},2}, v_{1,2}\})$. Similarly, $v_{2,1}$ must be a codeword, as 
$I(\{v_{2,1},v_{1,0}, v_{2,\overline{2}}\}) = I(\{v_{2,1}, v_{2,\overline{2}}\})$. 
Also, $v_{3, \overline{2}}$ is a codeword 
to avoid three non-codewords $v_{1,\overline{2}}$, 
$v_{2, \overline{2}}$, $v_{3,\overline{2}}$ in a straight line due to Claim 2(b). 

Let us now calculate the updated charges of $v_{0,1}$ and $v_{1,0}$ after applying (R1). 
Observe that, $|I(v_{\overline{1},1})| \geq 4,
|I(v_{\overline{1},2})| \geq 3,
|I(v_{0,2})| \geq 4,
|I(v_{1,1})| \geq 5,
|I(v_{2,0})| \geq 4,
|I(v_{2,\overline{1}})| \geq 4,
|I(v_{1,\overline{1}})| \geq 3
$.
Hence, $ch_1(v_{0,1}) \geq \frac{1}{3} + \frac{1}{4} + \frac{1}{3} + \frac{1}{2} = \frac{17}{12} = \frac{4}{3} + \frac{1}{12}$ 
and similarly, 
$ch_1(v_{1,0}) \geq \frac{1}{2} + \frac{1}{3} + \frac{1}{3} + \frac{1}{4} = \frac{17}{12} = \frac{4}{3} + \frac{1}{12}$.
Therefore, each of $v_{0,1}$ and $v_{1,0}$ will donate $\frac{1}{12}$ amount of charge to $v_{0,0}$ while applying (R2).

Now let us calculate the updated charge of $v_{0,0}$ after applying (R1) and (R2).
Observe that, $|I(v_{\overline{1},1})| \geq 4,
|I(v_{\overline{1},0})| \geq 4,
|I(v_{0,\overline{1}})| \geq 4,
|I(v_{1,\overline{1}})| \geq 3
$.
Hence, $ch_2(v_{0,0}) \geq \frac{1}{3} + \frac{1}{3} + \frac{1}{3} + \frac{1}{4} + \frac{1}{12} +  \frac{1}{12} = \frac{17}{12}$. 

\medskip

\begin{figure}[t]
                \centering
                \scalebox{.8}{\input{3LDB3Neighbours}}
                \caption{Reference figures for Case 4 in the proof of Theorem~\ref{thm ldb}(c), where gray vertices are codewords.}
                \label{3LDB3Neighbours}
            \end{figure}

        \noindent \textbf{Case 4: $\mathbf{|I(v_{0,0})| = 3}$.}  
        Since three non-codewords cannot be in a straight line
        due to Claim 2(b), without loss of generality there may be two choices of codeword neighbors of $v_{0,0}$. 

        The first choice is depicted in Figure~\ref{3LDB3Neighbours}(a)  
        where $v_{0,0}$ has exactly three codeword neighbors $v_{1,0}, v_{\overline{1},1}, v_{0,\overline{1}} $ and exactly three non-codeword neighbors $v_{0,1},v_{\overline{1},0}, v_{1,\overline{1}}$. 
        This
        is not possible as $I(\{ v_{0,0},v_{0,1},v_{\overline{1},0} \}) = I(\{ v_{0,1},v_{\overline{1},0} \})$.

        \medskip

         \sloppy The second choice is depicted in Figure~\ref{3LDB3Neighbours}(b) where $v_{0,0}$ has exactly three codeword neighbors $v_{1,0}, v_{1,\overline{1}}, v_{0,\overline{1}}$ and
         three non-codeword neighbors $v_{\overline{1},0}, v_{\overline{1},1}$ and $v_{0,1}$.
         Observe that $v_{\overline{1}, \overline{1}}, v_{\overline{1},2}, v_{\overline{2},1}, v_{1,1}, v_{\overline{2},2}, v_{\overline{2},0},v_{0,2}$ are codewords since there cannot be three consecutive non-codewords in a straight line, due to Claim 2(b). 
        Furthermore, $v_{\overline{3},2}, v_{\overline{2},3}$ (resp., $ v_{1,\overline{2}}, v_{2,\overline{1}}$) must be codewords as $v_{\overline{1},0}, v_{\overline{1},1}, v_{0,1}$ (resp., $v_{\overline{1},0}, v_{0,0}, v_{0,1}$) are in a curve, due to Claim 2(c).

        Notice that if either $v_{2,0}$ or $v_{0,\overline{2}}$ is a codeword, then the updated charge of $v_{0,0}$ after (R1) is ensured to be $ch_1(v_{0,0}) \geq \frac{4}{3}$. 
        
        Therefore, let us assume that $v_{2,0}$ and $v_{0,\overline{2}}$ are non-codewords. 
        Observe that $I(\{ v_{\overline{3},1},$ $ v_{\overline{1},0}, v_{0,\overline{2}} \}) = I(\{ v_{\overline{3},1}, v_{0,\overline{2}} \})$, thus $v_{\overline{3},1}$ must be a codeword. 
        Similarly, $v_{\overline{1},3} $ must be a codeword.
        
        Let us calculate the updated charges of $v_{\overline{1},0}, v_{\overline{1},1}, v_{0,1}$ after (R1).
        Observe that 
        $   I(v_{0, \overline{1}}) \geq 4,
            I(v_{\overline{1}, \overline{1}}) \geq 3,
            I(v_{\overline{2},0}) \geq 4,
            I(v_{\overline{2},1}) \geq 5,
            I(v_{\overline{2},2}) \geq 5,
            I(v_{\overline{1},2}) \geq 5,
            I(v_{0,2}) \geq 4,
            I(v_{1,1}) \geq 3,            
            I(v_{1,0}) \geq 4.
        $
        Hence, $ch_1(v_{\overline{1},0}) \geq \frac{1}{3} + \frac{1}{4} + \frac{1}{3} + \frac{1}{2} = \frac{17}{12} = \frac{4}{3} + \frac{1}{12}$,   
        $ch_1(v_{\overline{1},1}) \geq \frac{1}{2} + \frac{1}{2} + \frac{1}{2} = \frac{3}{2} = \frac{4}{3} + \frac{1}{6}$ and
        $ch_1(v_{0,1}) \geq \frac{1}{3} + \frac{1}{4} + \frac{1}{3} + \frac{1}{2} = \frac{17}{12} = \frac{4}{3} + \frac{1}{12}$.
        Therefore, each of $v_{\overline{1},0}$ and $ v_{0,1}$ will donate $\frac{1}{12}$ amount of charge to $v_{0,0}$ and $v_{\overline{1},1}$ will donate $\frac{1}{6}$ amount of charge to $v_{0,0}$.

        Now let us calculate the updated charge of $v_{0,0}$ after applying (R1) and (R2).
        Observe that 
        $
            I(v_{0,\overline{1}}) \geq 4,
            I(v_{1,\overline{1}}) \geq 5,
            I(v_{1,0}) \geq 4.
        $
        Therefore, $ch_2(v_{0,0}) \geq \frac{1}{3} + \frac{1}{2} + \frac{1}{3} + \frac{1}{12} + \frac{1}{12} + \frac{1}{6} = \frac{3}{2} > \frac{4}{3}$.

        \medskip

        \noindent\textbf{Case 5: $\mathbf{|I(v_{0,0})| \leq 2}$.}
        Observe that, regardless of how the codeword 
        neighbors (at most $2$) of $v_{0,0}$ are positioned, there 
        will exist three non-codeword vertices on a straight line, contradicting Claim 2(b). Thus, this particular case never occurs. \qed



\subsection{Proof of Theorem~\ref{thm ldb}(d) (lower bound)}\label{prf ldbd}
    In this section, we show that $D^B_{1,\ell}\geq\frac{2}{3}$ for each $\ell\geq4$.
    It is enough to show that 
     $ch_2(v) \geq 2=(\frac{2}{3})/(1-\frac{2}{3})$ for each non-codeword $v$ due to Lemma~\ref{lem lower bound} along with the discussion in Section~\ref{subsection general technique}. 

 Hence, let us calculate $ch_2(u)$ for  each 
       non-codeword $u$  based on how many adjacent codewords they have, and their positions. 
       For this part, without loss of generality, we will assume $v_{0,0}$ to be a non-codeword, and consider all the cases under this assumption. That is, under different scenarios, we want to show that $ch_2(v_{0,0}) \geq 2$, which will conclude the proof.

    Notice that, any  $(1,\leq 4)$-LDB code is, in particular, a $(1,\leq 3)$-LDB code.
    Thus,  Claim 2 is  valid also for $C$.

    \medskip

        \begin{figure}[t]
            \centering
            \scalebox{.8}{\begin{tikzpicture}
	\begin{pgfonlayer}{nodelayer}
		\node [style=Vertex] (1) at (-7, 0) {\footnotesize$v_{0,\overline{2}}$};
		\node [style=Vertex] (2) at (-6, 0) {\footnotesize$v_{1,\overline{2}}$};
		\node [style=Vertex] (3) at (-5, 0) {\footnotesize$v_{2,\overline{2}}$};
		\node [style=Vertex] (13) at (-7.5, 1) {\footnotesize$v_{\overline{1},\overline{1}}$};
		\node [style=Vertex,fill=Gray!60] (14) at (-6.5, 1) {\footnotesize$v_{0,\overline{1}}$};
		\node [style=Vertex,fill=Gray!60] (15) at (-5.5, 1) {\footnotesize$v_{1,\overline{1}}$};
		\node [style=Vertex] (16) at (-4.5, 1) {\footnotesize$v_{2,\overline{1}}$};
		\node [style=Vertex] (25) at (-8, 2) {\footnotesize$v_{\overline{2},0}$};
		\node [style=Vertex,fill=Gray!60] (26) at (-7, 2) {\footnotesize$v_{\overline{1},0}$};
		\node [style=Vertex] (27) at (-6, 2) {\footnotesize$v_{0,0}$};
		\node [style=Vertex,fill=Gray!60] (28) at (-5, 2) {\footnotesize$v_{1,0}$};
		\node [style=Vertex] (29) at (-4, 2) {\footnotesize$v_{2,0}$};
		\node [style=Vertex] (38) at (-7.5, 3) {\footnotesize$v_{\overline{2},1}$};
		\node [style=Vertex,fill=Gray!60] (39) at (-6.5, 3) {\footnotesize$v_{\overline{1},1}$};
		\node [style=Vertex,fill=Gray!60] (40) at (-5.5, 3) {\footnotesize$v_{0,1}$};
		\node [style=Vertex,fill=Gray!60] (41) at (-4.5, 3) {\footnotesize$v_{1,1}$};
		\node [style=Vertex] (42) at (-7, 4) {\footnotesize$v_{\overline{2},2}$};
		\node [style=Vertex,fill=Gray!60] (43) at (-6, 4) {\footnotesize$v_{\overline{1},2}$};
		\node [style=Vertex] (44) at (-5, 4) {\footnotesize$v_{0,2}$};
		\node [style=Vertex] (46) at (-2, 0) {\footnotesize$v_{0,\overline{2}}$};
		\node [style=Vertex,fill=Gray!60] (47) at (-1, 0) {\footnotesize$v_{1,\overline{2}}$};
		\node [style=Vertex] (48) at (0, 0) {\footnotesize$v_{2,\overline{2}}$};
		\node [style=Vertex] (50) at (-2.5, 1) {\footnotesize$v_{\overline{1},\overline{1}}$};
		\node [style=Vertex,fill=Gray!60] (51) at (-1.5, 1) {\footnotesize$v_{0,\overline{1}}$};
		\node [style=Vertex,fill=Gray!60] (52) at (-0.5, 1) {\footnotesize$v_{1,\overline{1}}$};
		\node [style=Vertex] (53) at (0.5, 1) {\footnotesize$v_{2,\overline{1}}$};

		\node [style=Vertex] (54) at (-3, 2) {\footnotesize$v_{\overline{2},0}$};
		\node [style=Vertex,fill=Gray!60] (55) at (-2, 2) {\footnotesize$v_{\overline{1},0}$};
		\node [style=Vertex] (56) at (-1, 2) {\footnotesize$v_{0,0}$};
		\node [style=Vertex,fill=Gray!60] (57) at (0, 2) {\footnotesize$v_{1,0}$};
		\node [style=Vertex] (58) at (1, 2) {\footnotesize$v_{2,0}$};
		\node [style=Vertex] (59) at (-2.5, 3) {\footnotesize$v_{\overline{2},1}$};
		\node [style=Vertex,fill=Gray!60] (60) at (-1.5, 3) {\footnotesize$v_{\overline{1},1}$};
		\node [style=Vertex,fill=Gray!60] (61) at (-0.5, 3) {\footnotesize$v_{0,1}$};
		\node [style=Vertex] (62) at (0.5, 3) {\footnotesize$v_{1,1}$};
		\node [style=Vertex] (63) at (-2, 4) {\footnotesize$v_{\overline{2},2}$};
		\node [style=Vertex,fill=Gray!60] (64) at (-1, 4) {\footnotesize$v_{\overline{1},2}$};
		\node [style=Vertex] (65) at (0, 4) {\footnotesize$v_{0,2}$};
        
		\node [style=Vertex] (66) at (-12, 0) {\footnotesize$v_{0,\overline{2}}$};
		\node [style=Vertex] (67) at (-11, 0) {\footnotesize$v_{1,\overline{2}}$};
		\node [style=Vertex] (68) at (-10, 0) {\footnotesize$v_{2,\overline{2}}$};
		\node [style=Vertex] (69) at (-12.5, 1) {\footnotesize$v_{\overline{1},\overline{1}}$};
		\node [style=Vertex,fill=Gray!60] (70) at (-11.5, 1) {\footnotesize$v_{0,\overline{1}}$};
		\node [style=Vertex,fill=Gray!60] (71) at (-10.5, 1) {\footnotesize$v_{1,\overline{1}}$};
		\node [style=Vertex] (72) at (-9.5, 1) {\footnotesize$v_{2,\overline{1}}$};
		\node [style=Vertex] (73) at (-13, 2) {\footnotesize$v_{\overline{2},0}$};
		\node [style=Vertex,fill=Gray!60] (74) at (-12, 2) {\footnotesize$v_{\overline{1},0}$};
		\node [style=Vertex] (75) at (-11, 2) {\footnotesize$v_{0,0}$};
		\node [style=Vertex,fill=Gray!60] (76) at (-10, 2) {\footnotesize$v_{1,0}$};
		\node [style=Vertex] (77) at (-9, 2) {\footnotesize$v_{2,0}$};
		\node [style=Vertex] (78) at (-12.5, 3) {\footnotesize$v_{\overline{2},1}$};
		\node [style=Vertex,fill=Gray!60] (79) at (-11.5, 3) {\footnotesize$v_{\overline{1},1}$};
		\node [style=Vertex,fill=Gray!60] (80) at (-10.5, 3) {\footnotesize$v_{0,1}$};
		\node [style=Vertex] (81) at (-9.5, 3) {\footnotesize$v_{1,1}$};
		\node [style=Vertex] (82) at (-12, 4) {\footnotesize$v_{\overline{2},2}$};
		\node [style=Vertex] (83) at (-11, 4) {\footnotesize$v_{\overline{1},2}$};
		\node [style=Vertex] (84) at (-10, 4) {\footnotesize$v_{0,2}$};

        \node at (-11,-1) {(a)};
        \node at (-6,-1) {(b)};
        \node at (-1,-1) {(c)};
        
	\end{pgfonlayer}
	\begin{pgfonlayer}{edgelayer}
		\draw (13) to (26);
		\draw (26) to (39);
		\draw (40) to (27);
		\draw (27) to (14);
		\draw (14) to (1);
		\draw (1) to (2);
		\draw (2) to (3);
		\draw (16) to (15);
		\draw (15) to (14);
		\draw (13) to (14);
		\draw (13) to (25);
		\draw (25) to (26);
		\draw (26) to (27);
		\draw (27) to (28);
		\draw (29) to (28);
		\draw (41) to (40);
		\draw (40) to (39);
		\draw (39) to (38);
		\draw (29) to (16);
		\draw (16) to (3);
		\draw (3) to (15);
		\draw (15) to (27);
		\draw (27) to (39);
		\draw (40) to (28);
		\draw (28) to (16);
		\draw (2) to (15);
		\draw (15) to (28);
		\draw (28) to (41);
		\draw (41) to (29);
		\draw (2) to (14);
		\draw (14) to (26);
		\draw (26) to (38);
		\draw (38) to (25);
		\draw (13) to (1);
		\draw (38) to (42);
		\draw (42) to (39);
		\draw (39) to (43);
		\draw (43) to (40);
		\draw (40) to (44);
		\draw (44) to (41);
		\draw (44) to (43);
		\draw (43) to (42);
		\draw (50) to (55);
		\draw (55) to (60);
		\draw (61) to (56);
		\draw (56) to (51);
		\draw (51) to (46);
		\draw (46) to (47);
		\draw (47) to (48);
		\draw (53) to (52);
		\draw (52) to (51);
		\draw (50) to (51);
		\draw (50) to (54);
		\draw (54) to (55);
		\draw (55) to (56);
		\draw (56) to (57);
		\draw (58) to (57);
		\draw (62) to (61);
		\draw (61) to (60);
		\draw (60) to (59);
		\draw (58) to (53);
		\draw (53) to (48);
		\draw (48) to (52);
		\draw (52) to (56);
		\draw (56) to (60);
		\draw (61) to (57);
		\draw (57) to (53);
		\draw (47) to (52);
		\draw (52) to (57);
		\draw (57) to (62);
		\draw (62) to (58);
		\draw (47) to (51);
		\draw (51) to (55);
		\draw (55) to (59);
		\draw (59) to (54);
		\draw (50) to (46);
		\draw (59) to (63);
		\draw (63) to (60);
		\draw (60) to (64);
		\draw (64) to (61);
		\draw (61) to (65);
		\draw (65) to (62);
		\draw (65) to (64);
		\draw (64) to (63);
		\draw (69) to (74);
		\draw (74) to (79);
		\draw (80) to (75);
		\draw (75) to (70);
		\draw (70) to (66);
		\draw (66) to (67);
		\draw (67) to (68);
		\draw (72) to (71);
		\draw (71) to (70);
		\draw (69) to (70);
		\draw (69) to (73);
		\draw (73) to (74);
		\draw (74) to (75);
		\draw (75) to (76);
		\draw (77) to (76);
		\draw (81) to (80);
		\draw (80) to (79);
		\draw (79) to (78);
		\draw (77) to (72);
		\draw (72) to (68);
		\draw (68) to (71);
		\draw (71) to (75);
		\draw (75) to (79);
		\draw (80) to (76);
		\draw (76) to (72);
		\draw (67) to (71);
		\draw (71) to (76);
		\draw (76) to (81);
		\draw (81) to (77);
		\draw (67) to (70);
		\draw (70) to (74);
		\draw (74) to (78);
		\draw (78) to (73);
		\draw (69) to (66);
		\draw (78) to (82);
		\draw (82) to (79);
		\draw (79) to (83);
		\draw (83) to (80);
		\draw (80) to (84);
		\draw (84) to (81);
		\draw (84) to (83);
		\draw (83) to (82);
	\end{pgfonlayer}
\end{tikzpicture}}
            \caption{Reference figures for Case 1 in the proof of Theorem~\ref{thm ldb}(d), where gray vertices are codewords.}
            \label{6NeighboursNonPrivate}
        \end{figure}
        
        \noindent \textbf{Case 1: $\mathbf{|I(v_{0,0})| = 6}$.}  In this case, all six neighbors of $v_{0,0}$ are codewords. 
        The situation is depicted in Figure \ref{6NeighboursNonPrivate}(a).

        Observe that, if $v_{\overline{1},2}$, $v_{2,\overline{1}}$, $v_{\overline{1},\overline{1}}$ are all 
        non-codewords, then $I(\{ v_{0,0}, v_{\overline{1},2}, v_{2,\overline{1}}, v_{\overline{1},\overline{1}} \}) = I(\{ v_{\overline{1},2}, v_{2,\overline{1}}, v_{\overline{1},\overline{1}} \})$. Thus, at least one of $v_{\overline{1},2}$, $v_{2,\overline{1}}$, $v_{\overline{1},\overline{1}}$ is a codeword.
        Hence, without loss of generality, let us assume that $v_{\overline{1},2}$ is a codeword. 

        Similarly, notice that 
        if $v_{1,1}$, $v_{\overline{2},1}$, $v_{1,\overline{2}}$ are all 
        non-codewords, then $I(\{ v_{0,0}, v_{1,1}, v_{\overline{2},1}, v_{1,\overline{2}} \}) = I(\{  v_{1,1}, v_{\overline{2},1}, v_{1,\overline{2}} \})$. Thus, at least one among $ v_{1,1}, v_{\overline{2},1}, v_{1,\overline{2}}$ is a codeword.
        Since we have already chosen 
        $v_{\overline{1},2}$ as a codeword, the choice of one codeword among 
        $v_{\overline{1},2},  v_{1,\overline{2}}, v_{1,1}$ will, without loss of generality, result in two cases: the ones depicted in Figures~\ref{6NeighboursNonPrivate}(b) and (c), respectively.

        Let us assume the scenario depicted in Figure~\ref{6NeighboursNonPrivate}(b). Observe that, 
        the three vertices $v_{\overline{2},0}, v_{\overline{1},\overline{1}}, v_{0,\overline{2}}$ are on a straight line, and thus one of them must be a codeword 
        due to Claim 2(b). 
        Hence, we  have $|I(u)| \geq 4$ for some 
        $u \in \{ v_{0,\overline{1}}, v_{\overline{1},0} \}$. This implies that 
        one of $v_{0,\overline{1}}$ and $v_{\overline{1},0}$  donates at least  
        $\frac{1}{3}$ charge and the other one  donates at least $\frac{1}{4}$ to $v_{0,0}$. 
        We already have 
        $
            |I(v_{\overline{1},1})| \geq 4,
            |I(v_{0,1})| \geq 5,
            |I(v_{1,0})| \geq 4,
            |I(v_{1,\overline{1}})| \geq 3  
        $.
        Therefore, $ch_1(v_{0,0}) \geq   \frac{1}{3} + \frac{1}{4}  + \frac{1}{3} + \frac{1}{2} + \frac{1}{3} + \frac{1}{4} = 2$.

Next let us assume the scenario depicted in Figure~\ref{6NeighboursNonPrivate}(c). 
Consider the following two sets: $S_1 = \{ v_{0,2}, v_{1,1}, v_{2,0} \}$ and $S_2 = \{ v_{\overline{2},0}, v_{\overline{1},\overline{1}}, v_{0,\overline{2}} \}$.
Note that the vertices of $S_1$ are in a straight line, and thus one of them must be a codeword. Similarly, the vertices of $S_2$ are in a straight line, and thus one of them must be a codeword. 
This will imply $ch_1(v_{0,0}) \geq 2$, irrespective of the choice of the codewords from $S_1$ and $S_2$. 

That means, $ch_1(v_{0,0}) \geq 2$ in this subcase, which in turn implies, $ch_2(v_{0,0}) \geq 2$ 
as well.

     \medskip

        \noindent \textbf{Case 2: $\mathbf{|I(v_{0,0})| = 5}$.}
        In this case $v_{0,0}$ has exactly five codeword neighbors $v_{0,1}, v_{\overline{1},1}, v_{\overline{1},0}$, $v_{0,\overline{1}}, v_{1,\overline{1}}$ and one non-codeword neighbor $v_{1,0}$.
        This case is depicted in Figure~\ref{5Neighbours5N}(a).
        Due to Claim 2(b), $v_{2,0}$ must be a codeword.
Moreover, if $v_{\overline{1},\overline{1}}$ and $v_{\overline{2},1}$ are non-codewords then $I(\{ v_{\overline{1},\overline{1}}, v_{\overline{2},1}, v_{0,0}, v_{1,0} \}) = I(\{ v_{\overline{1},\overline{1}}, v_{\overline{2},1}, v_{1,0} \})$. 
        Thus,  $v_{\overline{1},\overline{1}}$ or $v_{\overline{2},1}$ must be a codeword.
        Let us assume without loss of generality that $v_{\overline{2},1}$ is a codeword. 

        \begin{figure}[t]
            \centering
            \scalebox{.8}{\begin{tikzpicture}
	\begin{pgfonlayer}{nodelayer}
		\node [style=Vertex] (221) at (3, 0) {\footnotesize$v_{0,\overline{2}}$};
		\node [style=Vertex] (222) at (4, 0) {\footnotesize$v_{1,\overline{2}}$};
		\node [style=Empty] (223) at (5, 0) {};
		\node [style=Vertex] (224) at (2.5, 1) {\footnotesize$v_{\overline{1},\overline{1}}$};
		\node [style=Vertex,fill=Gray!60] (225) at (3.5, 1) {\footnotesize$v_{0,\overline{1}}$};
		\node [style=Vertex,fill=Gray!60] (226) at (4.5, 1) {\footnotesize$v_{1,\overline{1}}$};
		\node [style=Empty] (227) at (2, 2) {};
		\node [style=Vertex,fill=Gray!60] (228) at (3, 2) {\footnotesize$v_{\overline{1},0}$};
		\node [style=Vertex] (229) at (4, 2) {\footnotesize$v_{0,0}$};
		\node [style=Vertex] (230) at (5, 2) {\footnotesize$v_{1,0}$};
		\node [style=Vertex, fill=Gray!60] (231) at (2.5, 3) {\footnotesize$v_{\overline{2},1}$};
		\node [style=Vertex,fill=Gray!60] (232) at (3.5, 3) {\footnotesize$v_{\overline{1},1}$};
		\node [style=Vertex, fill=Gray!60] (233) at (4.5, 3) {\footnotesize$v_{0,1}$};
		\node [style=Vertex] (234) at (3, 4) {\footnotesize$v_{\overline{2},2}$};
		\node [style=Vertex] (235) at (4, 4) {\footnotesize$v_{\overline{1},2}$};
		\node [style=Empty] (236) at (5, 4) {};
		\node [style=Vertex] (241) at (5.5, 1) {\footnotesize$v_{2,\overline{1}}$};
		\node [style=Vertex] (242) at (5.5, 3) {\footnotesize$v_{1,1}$};
		\node [style=Vertex,fill=Gray!60] (243) at (6, 2) {\footnotesize$v_{2,0}$};
		\node [style=Vertex] (268) at (13, 0) {\footnotesize$v_{0,\overline{2}}$};
		\node [style=Vertex] (269) at (14, 0) {\footnotesize$v_{1,\overline{2}}$};
		\node [style=Vertex] (270) at (15, 0) {\footnotesize$v_{2,\overline{2}}$};
		\node [style=Vertex,fill=Gray!60] (271) at (12.5, 1) {\footnotesize$v_{\overline{1},\overline{1}}$};
		\node [style=Vertex,fill=Gray!60] (272) at (13.5, 1) {\footnotesize$v_{0,\overline{1}}$};
		\node [style=Vertex,fill=Gray!60] (273) at (14.5, 1) {\footnotesize$v_{1,\overline{1}}$};
		\node [style=Vertex] (274) at (12, 2) {\footnotesize$v_{\overline{2},0}$};
		\node [style=Vertex,fill=Gray!60] (275) at (13, 2) {\footnotesize$v_{\overline{1},0}$};
		\node [style=Vertex] (276) at (14, 2) {\footnotesize$v_{0,0}$};
		\node [style=Vertex] (277) at (15, 2) {\footnotesize$v_{1,0}$};
		\node [style=Vertex,fill=Gray!60] (278) at (12.5, 3) {\footnotesize$v_{\overline{2},1}$};
		\node [style=Vertex,fill=Gray!60] (279) at (13.5, 3) {\footnotesize$v_{\overline{1},1}$};
		\node [style=Vertex,fill=Gray!60] (280) at (14.5, 3) {\footnotesize$v_{0,1}$};
		\node [style=Vertex] (281) at (13, 4) {\footnotesize$v_{\overline{2},2}$};
		\node [style=Vertex,fill=Gray!60] (282) at (14, 4) {\footnotesize$v_{\overline{1},2}$};
		\node [style=Vertex] (283) at (15, 4) {\footnotesize$v_{0,2}$};
		\node [style=Vertex] (284) at (15.5, 1) {\footnotesize$v_{2,\overline{1}}$};
		\node [style=Vertex] (285) at (15.5, 3) {\footnotesize$v_{1,1}$};
		\node [style=Vertex, fill=Gray!60] (286) at (16, 2) {\footnotesize$v_{2,0}$};
		\node [style=Empty] (287) at (8, 0) {};
		\node [style=Empty] (288) at (9, 0) {};
		\node [style=Empty] (289) at (10, 0) {};
		\node [style=Vertex] (290) at (7.5, 1) {\footnotesize$v_{\overline{1},\overline{1}}$};
		\node [style=Vertex,fill=Gray!60] (291) at (8.5, 1) {\footnotesize$v_{0,\overline{1}}$};
		\node [style=Vertex,fill=Gray!60] (292) at (9.5, 1) {\footnotesize$v_{1,\overline{1}}$};
		\node [style=Empty] (293) at (7, 2) {};
		\node [style=Vertex,fill=Gray!60] (294) at (8, 2) {\footnotesize$v_{\overline{1},0}$};
		\node [style=Vertex] (295) at (9, 2) {\footnotesize$v_{0,0}$};
		\node [style=Vertex] (296) at (10, 2) {\footnotesize$v_{1,0}$};
		\node [style=Vertex,fill=Gray!60] (297) at (7.5, 3) {\footnotesize$v_{\overline{2},1}$};
		\node [style=Vertex,fill=Gray!60] (298) at (8.5, 3) {\footnotesize$v_{\overline{1},1}$};
		\node [style=Vertex,fill=Gray!60] (299) at (9.5, 3) {\footnotesize$v_{0,1}$};
		\node [style=Vertex,fill=Gray!60] (300) at (8, 4) {\footnotesize$v_{\overline{2},2}$};
		\node [style=Vertex,fill=Gray!60] (301) at (9, 4) {\footnotesize$v_{\overline{1},2}$};
		\node [style=Empty] (302) at (10, 4) {};
		\node [style=Vertex] (303) at (10.5, 1) {\footnotesize$v_{2,\overline{1}}$};
		\node [style=Vertex] (304) at (10.5, 3) {\footnotesize$v_{1,1}$};
		\node [style=Vertex,fill=Gray!60] (305) at (11, 2) {\footnotesize$v_{2,0}$};
        

        \node at (4, -1) {(a)};
        \node at (9, -1) {(b)};
        \node at (14, -1) {(c)};
        
	\end{pgfonlayer}
	\begin{pgfonlayer}{edgelayer}


		\draw (224) to (228);
		\draw (228) to (232);
		\draw (233) to (229);
		\draw (229) to (225);
		\draw (225) to (221);
		\draw (221) to (222);
		\draw (222) to (223);
		\draw (226) to (225);
		\draw (224) to (225);
		\draw (224) to (227);
		\draw (227) to (228);
		\draw (228) to (229);
		\draw (229) to (230);
		\draw (233) to (232);
		\draw (232) to (231);
		\draw (223) to (226);
		\draw (226) to (229);
		\draw (229) to (232);
		\draw (233) to (230);
		\draw (222) to (226);
		\draw (226) to (230);
		\draw (222) to (225);
		\draw (225) to (228);
		\draw (228) to (231);
		\draw (231) to (227);
		\draw (224) to (221);
		\draw (231) to (234);
		\draw (232) to (235);
		\draw (235) to (233);
		\draw (233) to (236);
		\draw (232) to (234);
		\draw (234) to (235);
		\draw (235) to (236);
		\draw (236) to (242);
		\draw (242) to (243);
		\draw (243) to (241);
		\draw (241) to (223);
		\draw (241) to (230);
		\draw [in=180, out=0] (230) to (243);
		\draw (230) to (242);
		\draw (241) to (226);
		\draw (242) to (233);
		\draw (271) to (275);
		\draw (275) to (279);
		\draw (280) to (276);
		\draw (276) to (272);
		\draw (272) to (268);
		\draw (268) to (269);
		\draw (269) to (270);
		\draw (273) to (272);
		\draw (271) to (272);
		\draw (271) to (274);
		\draw (274) to (275);
		\draw (275) to (276);
		\draw (276) to (277);
		\draw (280) to (279);
		\draw (279) to (278);
		\draw (270) to (273);
		\draw (273) to (276);
		\draw (276) to (279);
		\draw (280) to (277);
		\draw (269) to (273);
		\draw (273) to (277);
		\draw (269) to (272);
		\draw (272) to (275);
		\draw (275) to (278);
		\draw (278) to (274);
		\draw (271) to (268);
		\draw (278) to (281);
		\draw (279) to (282);
		\draw (282) to (280);
		\draw (280) to (283);
		\draw (279) to (281);
		\draw (281) to (282);
		\draw (282) to (283);
		\draw (283) to (285);
		\draw (285) to (286);
		\draw (286) to (284);
		\draw (284) to (270);
		\draw (284) to (277);
		\draw (277) to (286);
		\draw (277) to (285);
		\draw (284) to (273);
		\draw (285) to (280);
		\draw (290) to (294);
		\draw (294) to (298);
		\draw (299) to (295);
		\draw (295) to (291);
		\draw (291) to (287);
		\draw (287) to (288);
		\draw (288) to (289);
		\draw (292) to (291);
		\draw (290) to (291);
		\draw (290) to (293);
		\draw (293) to (294);
		\draw (294) to (295);
		\draw (295) to (296);
		\draw (299) to (298);
		\draw (298) to (297);
		\draw (289) to (292);
		\draw (292) to (295);
		\draw (295) to (298);
		\draw (299) to (296);
		\draw (288) to (292);
		\draw (292) to (296);
		\draw (288) to (291);
		\draw (291) to (294);
		\draw (294) to (297);
		\draw (297) to (293);
		\draw (290) to (287);
		\draw (297) to (300);
		\draw (298) to (301);
		\draw (301) to (299);
		\draw (299) to (302);
		\draw (298) to (300);
		\draw (300) to (301);
		\draw (301) to (302);
		\draw (302) to (304);
		\draw (304) to (305);
		\draw (305) to (303);
		\draw (303) to (289);
		\draw (303) to (296);
		\draw [in=180, out=0] (296) to (305);
		\draw (296) to (304);
		\draw (303) to (292);
		\draw (304) to (299);
	\end{pgfonlayer}
\end{tikzpicture}}
            \caption{Reference figure for Case 2 in the proof of Theorem~\ref{thm ldb}(d), where gray vertices are codewords.}
            \label{5Neighbours5N}
        \end{figure}

        If $v_{\overline{1}, \overline{1}}$ is a non-codeword (depicted in Figure~\ref{5Neighbours5N}(b)), then  $v_{\overline{2},2}$ must also be a codeword since, $I(\{ v_{\overline{1},\overline{1}}, v_{\overline{2},2}, v_{0,0}, v_{1,0} \}) = I(\{ v_{\overline{1},\overline{1}}, v_{\overline{2},2}, v_{1,0} \}) $.
        Similarly, $v_{\overline{1},2}$ must also be a codeword. 
        Thus, 
        $
            |I(v_{0,1})| \geq 3,
            |I(v_{\overline{1},1})| \geq 6,
            |I(v_{\overline{1},0})| \geq 4,
            |I(v_{0,\overline{1}})| \geq 3,
            |I(v_{1,\overline{1}})| \geq 2
        $.
        Hence, $ch_1(v_{0,0}) \geq \frac{1}{4} + 1 + \frac{1}{3} + \frac{1}{4} + \frac{1}{5} = \frac{61}{30} = 2 + \frac{1}{30}$. Therefore, in particular, $ch_2(v_{0,0}) \geq 2$. 
          
        So, we may now suppose that also
        $v_{\overline{1}, \overline{1}}$ is a codeword. This is depicted in Figure~\ref{5Neighbours5N}(c). Observe that, if $v_{\overline{2}, 0}$ is a codeword, then 
        $ch_1(v_{0,0}) \geq 2$. 
        Thus, we may assume that $v_{\overline{2}, 0}$ is a non-codeword. 
Next notice that $I(\{ v_{\overline{1},2}, v_{1,\overline{2}}, v_{\overline{2},0}, v_{0,0} \}) = I(\{ v_{\overline{1},2}, v_{1,\overline{2}}, v_{\overline{2},0} \})$,
which implies that at least one 
of $v_{\overline{1},2}$ and $v_{1,\overline{2}}$ is a 
codeword. Hence, without loss of generality, assume 
that $v_{\overline{1}, 2}$ is a codeword. 
    Further note that, 
    if $v_{1,\overline{2}}$ is a codeword, then 
        $ch_1(v_{0,0}) \geq 2$. 
        Thus, we may assume that
        $v_{1,\overline{2}}$ is a non-codeword. 

        Notice that, one among $\{ v_{0,2}, v_{2,\overline{1}} \}$ and one among $\{ v_{1,1}, v_{2,\overline{2}} \}$ are codewords due to Claim 2(c). This will result in four different subcases (see Figure~\ref{5Neighbours5N1} for reference). We list them below. 
 \begin{figure}[t]
            \centering
            \scalebox{.8}{\input{5Neighbours5n-2}}
            \caption{Reference figures for the subcases of Case 2 in the proof of Theorem~\ref{thm ldb}(d), where gray vertices are codewords.}
            \label{5Neighbours5N1}
        \end{figure}
        
 \begin{enumerate}[(a)]
     \item Suppose $v_{0,2}$ and $v_{2,\overline{2}}$ are codewords, see Figure~\ref{5Neighbours5N1}(a). Note that,   
     one among $\{v_{1,1}, v_{2, \overline{1}}\}$ is a codeword since 
        $I( \{  v_{1,0}, v_{0,0},v_{1,1},v_{2,\overline{1}} \} ) = I(\{v_{0,0},v_{1,1},v_{2,\overline{1}}\})$. 
    This will imply $ch_1(v_{0,0}) \geq 2$.

    \item Suppose
    $v_{2,\overline{1}}$ and $v_{1,1}$ are codewords, see Figure~\ref{5Neighbours5N1}(b). Note that,   
         one among $\{v_{0,\overline{2}}, v_{1,\overline{2}}, v_{2, \overline{2}}\}$ is a codeword. Since $v_{1,\overline{2}}$ is a non-codeword according to our assumption, either 
         $v_{0,\overline{2}}$ or 
         $ v_{2, \overline{2}}$ must be a codeword. 
    In either scenario, $ch_1(v_{0,0}) \geq 2$. 
    
    \item Suppose $v_{0,2}$ and $v_{1,1}$ are codewords, see Figure~\ref{5Neighbours5N1}(c). Note that
    one among $\{v_{0,\overline{2}}, v_{1,\overline{2}}, v_{2, \overline{2}}\}$ is a codeword. Since $v_{1,\overline{2}}$ is a non-codeword according to our assumption, either 
         $v_{0,\overline{2}}$ or 
         $ v_{2, \overline{2}}$ must be a codeword. 
    In either scenario, $ch_1(v_{0,0}) \geq 2$. 
    
    \item Suppose $v_{2,\overline{1}}$ and $v_{2,\overline{2}}$ are codewords, see Figure~\ref{5Neighbours5N1}(d). 
    Notice that $v_{1,1}$ and $v_{0,2}$ are non-codewords due to Cases 2(a) and (b)
    respectively.
    Observe that both $v_{3,\overline{1}},v_{3,\overline{2}}$ are codewords, due to Claim 2(c).
    Furthermore, $v_{3,0}$, $v_{2,1}$, $v_{2,2}$ and $v_{3,1}$ are also codewords since $v_{1,0},v_{1,1},v_{0,2}$ are in a curve, due to Claim 2(c).
    Since $v_{1,0},v_{1,1}, v_{1,2}$ are in a straight line, due to Claim 2(b), $v_{1,2}$ is a codeword.
    
    Hence, after applying (R1) we have, $ch_1(v_{1,1}) \geq \frac{1}{4} + 1 + \frac{1}{2} + \frac{1}{4} = 2$, $ch_1(v_{1,0}) \geq \frac{1}{2} + 1 + \frac{1}{3} + \frac{1}{4} = 2 + \frac{1}{12}$ and $ch_1(v_{0,0}) = \frac{1}{4} + \frac{1}{2} + \frac{1}{2} + \frac{1}{3} + \frac{1}{3} = 2 -\frac{1}{12}$.
    Thus, $v_{1,0}$ donates $\frac{1}{12}$ charge to $v_{0,0}$ according to (R2). Therefore, $ch_2(v_{0,0}) = ch_1(v_{0,0}) + \frac{1}{12} = 2$.
\end{enumerate}

\noindent Thus, $ch_2(v_{0,0}) \geq 2$ if $v_{0,0}$ has exactly five codeword neighbors.


\medskip



    \begin{figure}[t]
        \centering
        \begin{subfigure}[b]{0.49\textwidth}
            \centering
            \scalebox{0.8}{\begin{tikzpicture}
	\begin{pgfonlayer}{nodelayer}
		
\node[style=Vertex] (v-3-3) at (-3,-3) {\footnotesize$v_{\overline{2},\overline{3}}$};
\node[style=Vertex] (v-2-3) at (-2,-3) {\footnotesize$v_{\overline{1},\overline{3}}$};
\node[style=Vertex] (v-1-3) at (-1,-3) {\footnotesize$v_{0,\overline{3}}$};
\node[style=Vertex] (v0-3)  at ( 0,-3) {\footnotesize$v_{1,\overline{3}}$};
\node[style=Vertex] (v1-3)  at ( 1,-3) {\footnotesize$v_{2,\overline{3}}$};
\node[style=Vertex] (v2-3)  at ( 2,-3) {\footnotesize$v_{3,\overline{3}}$};
\node[style=Vertex] (v3-3)  at ( 3,-3) {\footnotesize$v_{4,\overline{3}}$};

\node[style=Vertex] (v-3-2) at (-2.5,-2) {\footnotesize$v_{\overline{2},\overline{2}}$};
\node[style=Vertex] (v-2-2) at (-1.5,-2) {\footnotesize$v_{\overline{1},\overline{2}}$};
\node[style=Vertex,fill=Gray!60] (v-1-2) at (-0.5,-2) {\footnotesize$v_{0,\overline{2}}$};
\node[style=Vertex] (v0-2)  at ( 0.5,-2) {\footnotesize$v_{1,\overline{2}}$};
\node[style=Vertex] (v1-2)  at ( 1.5,-2) {\footnotesize$v_{2,\overline{2}}$};
\node[style=Vertex] (v2-2)  at ( 2.5,-2) {\footnotesize$v_{3,\overline{2}}$};
\node[style=Vertex] (v3-2)  at ( 3.5,-2) {\footnotesize$v_{4,\overline{2}}$};

\node[style=Vertex] (v-3-1) at (-3,-1) {\footnotesize$v_{\overline{3},\overline{1}}$};
\node[style=Vertex] (v-2-1) at (-2,-1) {\footnotesize$v_{\overline{2},\overline{1}}$};
\node[style=Vertex] (v-1-1) at (-1,-1) {\footnotesize$v_{\overline{1},\overline{1}}$};
\node[style=Vertex] (v0-1)  at ( 0,-1) {\footnotesize$v_{0,\overline{1}}$};
\node[style=Vertex,fill=Gray!60] (v1-1)  at ( 1,-1) {\footnotesize$v_{1,\overline{1}}$};
\node[style=Vertex] (v2-1)  at ( 2,-1) {\footnotesize$v_{2,\overline{1}}$};
\node[style=Vertex] (v3-1)  at ( 3,-1) {\footnotesize$v_{3,\overline{1}}$};

\node[style=Vertex] (v-30) at (-2.5,0) {\footnotesize$v_{\overline{3},0}$};
\node[style=Vertex] (v-20) at (-1.5,0) {\footnotesize$v_{\overline{2},0}$};
\node[style=Vertex,fill=Gray!60] (v-10) at (-0.5,0) {\footnotesize$v_{\overline{1},0}$};
\node[style=Vertex] (v00)   at ( 0.5,0) {\footnotesize$v_{0,0}$};
\node[style=Vertex] (v10)   at ( 1.5,0) {\footnotesize$v_{1,0}$};
\node[style=Vertex,fill=Gray!60] (v20)   at ( 2.5,0) {\footnotesize$v_{2,0}$};
\node[style=Vertex] (v30)   at ( 3.5,0) {\footnotesize$v_{3,0}$};

\node[style=Vertex] (v-31) at (-3,1) {\footnotesize$v_{\overline{4},1}$};
\node[style=Vertex] (v-21) at (-2,1) {\footnotesize$v_{\overline{3},1}$};
\node[style=Vertex,fill=Gray!60] (v-11) at (-1,1) {\footnotesize$v_{\overline{2},1}$};
\node[style=Vertex,fill=Gray!60] (v01)   at ( 0,1) {\footnotesize$v_{\overline{1},1}$};
\node[style=Vertex,fill=Gray!60] (v11)   at ( 1,1) {\footnotesize$v_{0,1}$};
\node[style=Vertex] (v21)   at ( 2,1) {\footnotesize$v_{1,1}$};
\node[style=Vertex] (v31)   at ( 3,1) {\footnotesize$v_{2,1}$};
\node[style=Vertex] (v100)   at ( 4,1) {\footnotesize$v_{3,1}$};

\node[style=Vertex] (v-32) at (-2.5,2) {\footnotesize$v_{\overline{4},2}$};
\node[style=Vertex] (v-22) at (-1.5,2) {\footnotesize$v_{\overline{3},2}$};
\node[style=Vertex,fill=Gray!60] (v-12) at (-0.5,2) {\footnotesize$v_{\overline{2},2}$};
\node[style=Vertex,fill=Gray!60] (v02)   at ( 0.5,2) {\footnotesize$v_{\overline{1},2}$};
\node[style=Vertex] (v12)   at ( 1.5,2) {\footnotesize$v_{0,2}$};
\node[style=Vertex] (v22)   at ( 2.5,2) {\footnotesize$v_{1,2}$};
\node[style=Vertex] (v32)   at ( 3.5,2) {\footnotesize$v_{2,2}$};

\node[style=Vertex] (v-33) at (-3,3) {\footnotesize$v_{\overline{5},3}$};
\node[style=Vertex] (v-23) at (-2,3) {\footnotesize$v_{\overline{4},3}$};
\node[style=Vertex] (v-13) at (-1,3) {\footnotesize$v_{\overline{3},3}$};
\node[style=Vertex] (v03)   at ( 0,3) {\footnotesize$v_{\overline{2},3}$};
\node[style=Vertex] (v13)   at ( 1,3) {\footnotesize$v_{\overline{1},3}$};
\node[style=Vertex] (v23)   at ( 2,3) {\footnotesize$v_{0,3}$};
\node[style=Vertex] (v33)   at ( 3,3) {\footnotesize$v_{1,3}$};
        
	\end{pgfonlayer}
    
	\begin{pgfonlayer}{edgelayer}

    \draw (v100) -- (v32);
    \draw (v100) -- (v31);
    \draw (v100) -- (v30);
		
        \foreach \j in {-3,...,3}{
            \foreach \i in {-3,...,2}{ 
                \draw (v\i\j) -- (v\the\numexpr\i+1\relax\j);
            }
        }

        \foreach \j in {-3,-1,1}{
            \foreach \i in {-3,...,3}{
                \draw (v\i\j) -- (v\i\the\numexpr\j+1\relax);
            }
            \foreach \i in {-2,...,3}{ 
                \draw (v\i\j) -- (v\the\numexpr\i-1\relax\the\numexpr\j+1\relax);
            }
        }
        
        \foreach \j in {-2,0,2}{
            \foreach \i in {-3,...,3}{
                \draw (v\i\j) -- (v\i\the\numexpr\j+1\relax);
            }
            \foreach \i in {-3,...,2}{ 
                \draw (v\i\j) -- (v\the\numexpr\i+1\relax\the\numexpr\j+1\relax);
            }
        }


	\end{pgfonlayer}
\end{tikzpicture}}            
            \caption{}
        \end{subfigure}
        \hfill
        \begin{subfigure}[b]{0.49\textwidth}
            \centering
            \scalebox{0.8}{\begin{tikzpicture}
	\begin{pgfonlayer}{nodelayer}
		
\node[style=Vertex] (v-3-3) at (-3,-3) {\footnotesize$v_{\overline{2},\overline{3}}$};
\node[style=Vertex] (v-2-3) at (-2,-3) {\footnotesize$v_{\overline{1},\overline{3}}$};
\node[style=Vertex] (v-1-3) at (-1,-3) {\footnotesize$v_{0,\overline{3}}$};
\node[style=Vertex] (v0-3)  at ( 0,-3) {\footnotesize$v_{1,\overline{3}}$};
\node[style=Vertex] (v1-3)  at ( 1,-3) {\footnotesize$v_{2,\overline{3}}$};
\node[style=Vertex] (v2-3)  at ( 2,-3) {\footnotesize$v_{3,\overline{3}}$};
\node[style=Vertex] (v3-3)  at ( 3,-3) {\footnotesize$v_{4,\overline{3}}$};

\node[style=Vertex] (v-3-2) at (-2.5,-2) {\footnotesize$v_{\overline{2},\overline{2}}$};
\node[style=Vertex] (v-2-2) at (-1.5,-2) {\footnotesize$v_{\overline{1},\overline{2}}$};
\node[style=Vertex] (v-1-2) at (-0.5,-2) {\footnotesize$v_{0,\overline{2}}$};
\node[style=Vertex] (v0-2)  at ( 0.5,-2) {\footnotesize$v_{1,\overline{2}}$};
\node[style=Vertex] (v1-2)  at ( 1.5,-2) {\footnotesize$v_{2,\overline{2}}$};
\node[style=Vertex] (v2-2)  at ( 2.5,-2) {\footnotesize$v_{3,\overline{2}}$};
\node[style=Vertex] (v3-2)  at ( 3.5,-2) {\footnotesize$v_{4,\overline{2}}$};

\node[style=Vertex] (v-3-1) at (-3,-1) {\footnotesize$v_{\overline{3},\overline{1}}$};
\node[style=Vertex] (v-2-1) at (-2,-1) {\footnotesize$v_{\overline{2},\overline{1}}$};
\node[style=Vertex,fill=Gray!60] (v-1-1) at (-1,-1) {\footnotesize$v_{\overline{1},\overline{1}}$};
\node[style=Vertex,fill=Gray!60] (v0-1)  at ( 0,-1) {\footnotesize$v_{0,\overline{1}}$};
\node[style=Vertex,fill=Gray!60] (v1-1)  at ( 1,-1) {\footnotesize$v_{1,\overline{1}}$};
\node[style=Vertex,fill=Gray!60] (v2-1)  at ( 2,-1) {\footnotesize$v_{2,\overline{1}}$};
\node[style=Vertex] (v3-1)  at ( 3,-1) {\footnotesize$v_{3,\overline{1}}$};

\node[style=Vertex] (v-30) at (-2.5,0) {\footnotesize$v_{\overline{3},0}$};
\node[style=Vertex] (v-20) at (-1.5,0) {\footnotesize$v_{\overline{2},0}$};
\node[style=Vertex,fill=Gray!60] (v-10) at (-0.5,0) {\footnotesize$v_{\overline{1},0}$};
\node[style=Vertex] (v00)   at ( 0.5,0) {\footnotesize$v_{0,0}$};
\node[style=Vertex] (v10)   at ( 1.5,0) {\footnotesize$v_{1,0}$};
\node[style=Vertex,fill=Gray!60] (v20)   at ( 2.5,0) {\footnotesize$v_{2,0}$};
\node[style=Vertex] (v30)   at ( 3.5,0) {\footnotesize$v_{3,0}$};

\node[style=Vertex] (v-31) at (-3,1) {\footnotesize$v_{\overline{4},1}$};
\node[style=Vertex] (v-21) at (-2,1) {\footnotesize$v_{\overline{3},1}$};
\node[style=Vertex,fill=Gray!60] (v-11) at (-1,1) {\footnotesize$v_{\overline{2},1}$};
\node[style=Vertex,fill=Gray!60] (v01)   at ( 0,1) {\footnotesize$v_{\overline{1},1}$};
\node[style=Vertex] (v11)   at ( 1,1) {\footnotesize$v_{0,1}$};
\node[style=Vertex] (v21)   at ( 2,1) {\footnotesize$v_{1,1}$};
\node[style=Vertex] (v31)   at ( 3,1) {\footnotesize$v_{2,1}$};

\node[style=Vertex] (v-32) at (-2.5,2) {\footnotesize$v_{\overline{4},2}$};
\node[style=Vertex] (v-22) at (-1.5,2) {\footnotesize$v_{\overline{3},2}$};
\node[style=Vertex] (v-12) at (-0.5,2) {\footnotesize$v_{\overline{2},2}$};
\node[style=Vertex,fill=Gray!60] (v02)   at ( 0.5,2) {\footnotesize$v_{\overline{1},2}$};
\node[style=Vertex,fill=Gray!60] (v12)   at ( 1.5,2) {\footnotesize$v_{0,2}$};
\node[style=Vertex] (v22)   at ( 2.5,2) {\footnotesize$v_{1,2}$};
\node[style=Vertex] (v32)   at ( 3.5,2) {\footnotesize$v_{2,2}$};

\node[style=Vertex] (v-33) at (-3,3) {\footnotesize$v_{\overline{5},3}$};
\node[style=Vertex] (v-23) at (-2,3) {\footnotesize$v_{\overline{4},3}$};
\node[style=Vertex] (v-13) at (-1,3) {\footnotesize$v_{\overline{3},3}$};
\node[style=Vertex] (v03)   at ( 0,3) {\footnotesize$v_{\overline{2},3}$};
\node[style=Vertex] (v13)   at ( 1,3) {\footnotesize$v_{\overline{1},3}$};
\node[style=Vertex] (v23)   at ( 2,3) {\footnotesize$v_{0,3}$};
\node[style=Vertex] (v33)   at ( 3,3) {\footnotesize$v_{1,3}$};
        
	\end{pgfonlayer}
    
	\begin{pgfonlayer}{edgelayer}
		
        \foreach \j in {-3,...,3}{
            \foreach \i in {-3,...,2}{ 
                \draw (v\i\j) -- (v\the\numexpr\i+1\relax\j);
            }
        }

        \foreach \j in {-3,-1,1}{
            \foreach \i in {-3,...,3}{
                \draw (v\i\j) -- (v\i\the\numexpr\j+1\relax);
            }
            \foreach \i in {-2,...,3}{ 
                \draw (v\i\j) -- (v\the\numexpr\i-1\relax\the\numexpr\j+1\relax);
            }
        }
        
        \foreach \j in {-2,0,2}{
            \foreach \i in {-3,...,3}{
                \draw (v\i\j) -- (v\i\the\numexpr\j+1\relax);
            }
            \foreach \i in {-3,...,2}{ 
                \draw (v\i\j) -- (v\the\numexpr\i+1\relax\the\numexpr\j+1\relax);
            }
        }


	\end{pgfonlayer}
\end{tikzpicture}}            
            \caption{}
        \end{subfigure}
        \caption{Reference figures for Case 3 in the proof of Theorem~\ref{thm ldb}(d), where gray vertices are codewords.}
        \label{fig:4ldbcases}
    \end{figure}

        \noindent \textbf{Case 3: $\mathbf{|I(v_{0,0})| = 4}$.}  Since three non-codewords cannot be in a straight line
        due to Claim 2(b), we need to consider two valid subcases here based on the adjacency of the two non-codeword neighbors of $v_{0,0}$. 

\medskip
        
        \noindent \textbf{Subcase 1: the two non-codeword neighbors of $\mathbf{v_{0,0}}$ are non-adjacent.} In this subcase, depicted in  Figure~\ref{fig:4ldbcases}(a), $v_{0,0}$ has exactly two non-codeword neighbors: $v_{1, 0}$ and $v_{0, \overline{1}}$. Here we are trying to capture the case where the two non-codeword neighbors of $v_{0,0}$ are non-adjacent (and do not form a straight line together with $v_{0,0}$). Due to vertex transitivity of 
        $\mathbf{G_6}$, one of the non-codeword neighbors (for us, $v_{1,0}$) can be fixed without loss of generality. The choice of the second non-codeword neighbor can be either $v_{0, \overline{1}}$ or 
        $v_{\overline{1}, 1}$. Due to the reflectional symmetry along the $X$-axis, we can choose the former option without loss of generality. 
        
         Note that, 
         $v_{\overline{2},1}, v_{\overline{2},2}$ and $v_{\overline{1},2}$ are codewords due to Claim 2(c). 
         Also, $v_{2,0}$ and $v_{0,\overline{2}}$ must be codewords due to Claim 2(b). 
         Notice that if $v_{2,\overline{1}}$ is a non-codeword, then $v_{1,1}$, $v_{0,2}$, $v_{1,2}$ and $v_{2,1}$ are codewords due to Claim 2(c). 
         On the one hand, if $v_{1,\overline{2}}$ is also a non-codeword, then $v_{\overline{1},\overline{1}}, v_{\overline{2},\overline{1}}, v_{\overline{2},0}$ and $v_{\overline{1},\overline{2}}$ must also be codewords due to similar arguments.
         In this scenario, $ch_1(v_{0,0}) \geq \frac{1}{6} + \frac{1}{2} + 1 + \frac{1}{2} = 2 + \frac{1}{6}$.
         On the other hand, if exactly one of $v_{2,\overline{1}}$ and $v_{1,\overline{2}}$ is a non-codeword, suppose without loss of generality that $v_{1,\overline{2}}$ is a codeword, then, one among $v_{\overline{1},\overline{1}}$ and $v_{2,\overline{2}}$ is a codeword since, $I(\{ v_{0,\overline{1}}, v_{\overline{1},\overline{1}}, v_{2,\overline{2}} \}) = I(\{ v_{\overline{1},\overline{1}}, v_{2,\overline{2}} \})$. 
         If $v_{\overline{1},\overline{1}}$ is a codeword, then also $v_{\overline{2},0}$ (or $v_{2,\overline{2}}$) is a codeword since we have $I(\{v_{\overline{2},0},v_{2,\overline{2}},v_{0,\overline{1}}\})=I(\{v_{\overline{2},0},v_{2,\overline{2}}\})$
         In this scenario, $ch_1(v_{0,0}) \geq \frac{1}{2} + 1 + \frac{1}{3} + \frac{1}{5} = 2 + \frac{1}{30}$ (if $v_{\overline{1},\overline{1}}$ is a codeword) or $ch_1(v_{0,0}) \geq \frac{1}{2} + 1 + \frac{1}{4} + \frac{1}{4} = 2$ (if $v_{2,\overline{2}}$ is a codeword). 
         This implies $ch_2(v_{0,0}) \geq 2$ if $v_{2,\overline{1}}$ is a non-codeword.
         Similarly, one can show that  $ch_2(v_{0,0}) \geq 2$ if $v_{1,\overline{2}}$ is a non-codeword.
         Thus, henceforth in this subcase we can assume that both $v_{2,\overline{1}}$ and $v_{1,\overline{2}}$ are codewords. 

         

Let $v_{2,\overline{2}}$ be a non-codeword. Hence, both $v_{\overline{1},\overline{1}}$ and $v_{1,1}$ are codewords. 
Observe that, if $v_{0,2}$ or $v_{\overline{2},0}$ is a codeword, then $ch_1(v_{0,0}) \geq 2$.
Hence, we can suppose that both $v_{0,2}$ and $v_{\overline{2},0}$ are non-codewords.
Note that, $I(\{  v_{1,0}, v_{0,2}, v_{2,\overline{2}}, u \}) = I(\{ v_{0,2}, v_{2,\overline{2}}, u \})$ for all $u\in \{ v_{2,1}, v_{3,0}, v_{3,\overline{1}} \}$.
Therefore, $  v_{2,1}, v_{3,0}$ and $v_{3,\overline{1}}$ must be codewords.
This implies, 
$ch_1(v_{0,0}) = \frac{1}{3} + 1 + \frac{1}{3} + \frac{1}{4} = 2 - \frac{1}{12}$ and
$ch_1(v_{1,0}) = \frac{1}{3} + \frac{1}{3} + 1 + \frac{1}{3} + \frac{1}{4} = 2 + \frac{1}{4}
$.
Hence, $v_{1,0}$ donates a charge of $\frac{1}{4}$ to $v_{0,0}$, according to (R2).  Therefore, $ch_2(v_{0,0}) \geq ch_1(v_{0,0}) + \frac{1}{4} \geq 2$.

Let $v_{2,\overline{2}}$ be a codeword. 
Notice that, $v_{\overline{1},1}$ and $v_{1,\overline{1}}$ donate $1$ and $\frac{1}{3}$ to $v_{0,0}$ according to (R1).
Next we want to analyze how much charge $v_{0,1}$ and $v_{1,0}$ collectively donate to $v_{0,0}$ according to (R1) and (R2).
Notice that, if one among $v_{0,2}$ and $v_{1,1}$ is a codeword, then $v_{0,1}$ donates a charge of at least $\frac{1}{3}$ to $v_{0,0}$ according to (R1).
However, if both $v_{0,2}$ and $v_{1,1}$ are non-codewords, then the following vertices are all codewords:
$v_{1,2}$ (by Claim 2(b)), $v_{2,2}$, $v_{3,0}, v_{3,\overline{1}}$, $v_{2,1}$, $v_{3,1}$, $v_{3,\overline{2}}$ (by Claim 2(c)).  
This implies, 
$ch_1(v_{1,0}) \geq \frac{1}{4} + \frac{1}{2} + 1 + \frac{1}{3} = 2 + \frac{1}{12}$ 
and
$ch_1(v_{1,1}) \geq \frac{1}{4} + \frac{1}{4} + 1 + \frac{1}{2} = 2$. Thus, $v_{1,0}$ donates a charge of at least $\frac{1}{12}$ to $v_{0,0}$ if $ch_1(v_{0,0}) < 2$, according to (R2). That means, 
$v_{0,1}$ and $v_{1,0}$ collectively donate at least $\frac{1}{3}$ to $v_{0,0}$ according to (R1) and (R2), unless $ch_1(v_{0,0}) \geq 2$. Similarly, due to symmetry, 
$v_{\overline{1},0}$ and $v_{0,\overline{1}}$ collectively donate at least $\frac{1}{3}$ to $v_{0,0}$ according to (R1) and (R2), unless $ch_1(v_{0,0}) \geq 2$.
Therefore, either $ch_1(v_{0,0}) \geq 2$, or  $ch_2(v_{0,0}) \geq 1 + \frac{1}{3}+ \frac{1}{3} + \frac{1}{3} = 2$, which implies $ch_2(v_{0,0}) \geq 2$.

\medskip 

        \noindent \textbf{Subcase 2: the two non-codeword neighbors of $\mathbf{v_{0,0}}$ are adjacent.}
        In this subcase, depicted in  Figure~\ref{fig:4ldbcases}(b), $v_{0,0}$ has exactly two non-codeword neighbors: $v_{0, 1}$ and $v_{1, 0}$.         
        Here we are trying to capture the case where the two non-codeword neighbors of $v_{0,0}$ are adjacent. 
        Since $\mathbf{G_6}$ is also triangle-transitive, 
        we can choose any 
        two adjacent neighbors of $v_{0,0}$ 
        without loss of generality.

        Notice that 
        $v_{\overline{1},2}, v_{0,2},v_{2,0},v_{2,\overline{1}}$ are all codewords to avoid three non-codewords in straight line due to Claim 2(b). 
        Also $v_{\overline{1},\overline{1}}$ must be a codeword as $I(\{ v_{0,0}, v_{0,1}, v_{1,0}, v_{\overline{1},\overline{1}} \}) = I(\{ v_{0,1}, v_{1,0}, v_{\overline{1},\overline{1}} \})$.
        Furthermore, one among $v_{\overline{2}, 1}$ and $v_{1, \overline{2}}$ must be a codeword as  $I(\{ v_{0,0}, v_{\overline{2}, 1}, v_{1, \overline{2}} \}) = I(\{ v_{\overline{2}, 1}, v_{1, \overline{2}} \})$. 
       Let $L$ denote the line through the points $(1,1)$ and $(0,0)$ in $\mathbb{R}^2$. Due to the reflectional symmetry along $L$ of the embedding of $\mathbf{G_6}$ in $\mathbb{R}^2$, without loss of generality we may assume that $v_{\overline{2}, 1}$ is a codeword. 

        Suppose $v_{1,1}$ is a non-codeword. 
        Then $v_{\overline{2},2},v_{\overline{2},3}$ and $v_{\overline{1},3}$  must be codewords as $v_{0,0}, v_{0,1}$ and $v_{1,1}$ are in a curve, due to Claim 2(c).
        By similar arguments, 
        $v_{2, \overline{2}}, v_{3,\overline{1}}$, and
        $v_{3,\overline{2}}$  must also be codewords.
      If $v_{1,\overline{2}}$ is a non-codeword, then $v_{\overline{2},0}$ must be a codeword as $I(\{ v_{0,0}, v_{0,1}, v_{1,\overline{2}}, v_{\overline{2},0} \}) = I(\{  v_{0,1}, v_{1,\overline{2}}, v_{\overline{2},0} \})$.
        In this scenario, $ch_1(v_{{0,0}}) \geq \frac{1}{2} + 1 + \frac{1}{3} + \frac{1}{3} > 2$.
        If $v_{1,\overline{2}}$ is a codeword, then $ch_1(v_{0,0}) \geq \frac{1}{2} + \frac{1}{2} + \frac{1}{2}+ \frac{1}{2} \geq 2$.
        That means, if $v_{1,1}$ is a non-codeword, then $ch_2(v_{0,0}) \geq 2$. 

        Therefore, let $v_{1,1}$ be a codeword.
        Observe that $v_{\overline{1},3}$ must be a codeword as $I(\{ v_{0,1}, v_{0,0}, v_{1,0}, v_{\overline{1},3} \}) = I(\{ v_{0,0}, v_{1,0}, v_{\overline{1},3} \})$.
        Similarly, $v_{3,\overline{1}}$ must also be a codeword. 
        If $v_{\overline{2},0}$ is a codeword, then 
        $ch_1(v_{0,0}) \geq 2$ as one among 
        $v_{0,\overline{2}}, v_{1,\overline{2}}$ and 
        $v_{2,\overline{2}}$ must be a codeword due to Claim 2(b). 
        If $v_{\overline{2},0}$ is a non-codeword,
        then $v_{1, \overline{2}}$ is a codeword as $I(\{ v_{0,0}, v_{0,1}, v_{\overline{2},0}, v_{1,\overline{2}} \}) = I(\{ v_{0,1}, v_{\overline{2},0}, v_{1,\overline{2}} \})$. 
        In this scenario, $ch_1(v_{0,0}) \geq 2$ if  $v_{0,\overline{2}}$ 
        is a codeword. Thus, we may assume that $v_{0,\overline{2}}$ is a non-codeword. 
Also, if both $v_{2,\overline{2}}$ and $v_{\overline{2},2}$  are codewords, then 
        $ch_1(v_{0,0}) \geq \frac{1}{2} + \frac{1}{2} +\frac{1}{2} +\frac{1}{2} =2$. Hence, we may further assume that at least one among $v_{2,\overline{2}}$ and $v_{\overline{2},2}$ 
        is a non-codeword. Without loss of generality, we may assume that $v_{2,\overline{2}}$ is a non-codeword. This implies, $v_{2,1}$ and $v_{3,0}$ are codewords as $I(\{ v_{1,0}, v_{0,1}, v_{2,\overline{2}},u \}) = I(\{ v_{0,1}, v_{2,\overline{2}},u \})$ for $u \in \{v_{2,1}, v_{3,0}\}$. 
        Furthermore, notice that $v_{\overline{2},2}$
        and $v_{1,2}$ cannot be both non-codewords since $I(\{v_{0,1}, v_{\overline{2},2}, v_{1,2}\}) = I(\{v_{\overline{2},2}, v_{1,2}\})$. Based on this, let us consider a few subcases. 
        \begin{enumerate}[(a)]
            \item If both $v_{\overline{2},2}$
        and $v_{1,2}$ are codewords, then $ch_1(v_{0,0})  \geq \frac{1}{2} + \frac{1}{2} + \frac{1}{2} + \frac{1}{3} = 2 - \frac{1}{6}$, 
        $ch_1(v_{0,1})  \geq \frac{1}{2} + \frac{1}{2} + \frac{1}{2} + \frac{1}{2} = 2$, and 
        $ch_1(v_{1,0})  \geq \frac{1}{2} + 1 + \frac{1}{3} + \frac{1}{3} = 2 + \frac{1}{6}$. 
        Thus, $v_{1,0}$ donates an amount of $\frac{1}{6}$ charge to $v_{0,0}$ according to (R2). This implies, 
        $ch_2(v_{0,0}) \geq ch_1(v_{0,0}) + \frac{1}{6} \geq 2$.

         \item If  $v_{\overline{2},2}$ is a codeword, and $v_{1,2}$ is a non-codeword, then 
         $v_{\overline{2},3}$ is a codeword as 
         $I(\{v_{0,1}, v_{0,0}, v_{1,2}, v_{\overline{2},3}\})$ $= I(\{ v_{0,0}, v_{1,2}, v_{\overline{2},3}\})$. 
         Thus, $ch_1(v_{0,0})  \geq \frac{1}{2} + \frac{1}{2} + \frac{1}{2} + \frac{1}{3} = 2 - \frac{1}{6}$, 
        $ch_1(v_{0,1})  \geq \frac{1}{2} + 1 + \frac{1}{3} + \frac{1}{3} = 2 + \frac{1}{6}$, and 
        $ch_1(v_{1,0})  \geq \frac{1}{3} + 1 + \frac{1}{3} + \frac{1}{3} = 2$. 
        Thus, $v_{0,1}$ donates an amount of $\frac{1}{6}$ charge to $v_{0,0}$ according to (R2). This implies, 
        $ch_2(v_{0,0}) \geq ch_1(v_{0,0}) + \frac{1}{6} \geq 2$.

         \item If  $v_{\overline{2},2}$ is a non-codeword, and $v_{1,2}$ is a codeword, then 
         $v_{0,3}$ is a codeword as 
         $I(\{v_{0,1}, v_{1,0}, v_{\overline{2},2}, v_{0,3}\})$ $= I(\{ v_{1,0}, v_{\overline{2},2}, v_{0,3}\})$. 
         Thus, $ch_1(v_{0,0})  \geq \frac{1}{3} + \frac{1}{2} + \frac{1}{2} + \frac{1}{3} = 2 - \frac{1}{3}$, 
        $ch_1(v_{0,1})  \geq \frac{1}{2} + 1 + \frac{1}{3} + \frac{1}{3} = 2 + \frac{1}{6}$, and 
        $ch_1(v_{1,0})  \geq \frac{1}{2} + 1 + \frac{1}{3} + \frac{1}{3} = 2 + \frac{1}{6}$. 
        Thus, each of $v_{0,1},v_{1,0}$ donates an amount of $\frac{1}{6}$ charge to $v_{0,0}$ according to (R2). This implies, 
        $ch_2(v_{0,0}) \geq ch_1(v_{0,0}) + \frac{1}{6} + \frac{1}{6} \geq 2$.
        \end{enumerate}

    \medskip

    \noindent \textbf{Case 4: $\mathbf{|I(v_{0,0})| = 3}$.} 
Since three non-codewords cannot be in a straight line
        due to Claim 2(b), without loss of generality there may be two choices of codeword neighbors of $v_{0,0}$. 

        The first choice is depicted in Figure~\ref{3neighbors4ldb}(a)  
        where $v_{0,0}$ has exactly three codeword neighbors $v_{1,0}, v_{\overline{1},1}, v_{0,\overline{1}} $ and exactly three non-codeword neighbors $v_{0,1},v_{\overline{1},0}, v_{1,\overline{1}}$. 
        This
        is not possible as $I(\{ v_{0,0},v_{0,1},v_{\overline{1},0} \}) = I(\{ v_{0,1},v_{\overline{1},0} \})$.

        \begin{figure}[h!]
        \centering
        \scalebox{.8}{\input{5Neighbours5N-3}}
        \caption{Reference figures for Case 3 in the proof of Theorem~\ref{thm ldb}(d), where gray vertices are codewords.}
        \label{3neighbors4ldb}
    \end{figure}

         The second choice is depicted in Figure~\ref{3neighbors4ldb}(b) where $v_{0,0}$ has exactly three codeword neighbors $v_{1,0}, v_{1,\overline{1}}, v_{0,\overline{1}}$ and
         three non-codeword neighbors $v_{\overline{1},0}, v_{\overline{1},1}$ and $v_{0,1}$.
         Observe that $v_{\overline{1}, \overline{1}}, v_{\overline{1},2}, v_{\overline{2},1}, v_{1,1}, v_{\overline{2},2}, v_{\overline{2},0},v_{0,2}$ are codewords since there cannot be three consecutive non-codewords in a straight line, due to Claim 2(b). 
        Moreover, $v_{\overline{3},2}, v_{\overline{2},3}$ and $v_{\overline{3},3}$(resp., $ v_{1,\overline{2}}, v_{2,\overline{1}}$ and $v_{2,\overline{2}}$) must be codewords as $v_{\overline{1},0}, v_{\overline{1},1}, v_{0,1}$ (resp., $v_{\overline{1},0}, v_{0,0}, v_{0,1}$) are in a curve, due to Claim 2(c).
        Similarly, $v_{\overline{3},3}$ must be a codeword. Also, $v_{\overline{2}, \overline{1}}$ is a codeword as 
        $I(\{ v_{\overline{1},0},v_{\overline{1},1}, v_{0,0},v_{\overline{2},\overline{1}} \}) = I(\{ v_{\overline{1},1}, v_{0,0},v_{\overline{2},\overline{1}} \})$.
        Similarly, $v_{1,2}$ is a codeword.

        Suppose both $v_{2,0}$ and $v_{0,\overline{2}}$ are codewords. 
        Then $ch_1(v_{0,0}) \geq 2$, which in turn implies $ch_2(v_{0,0}) \geq 2$. 
Therefore, suppose that both $v_{2,0}$ and $v_{0,\overline{2}}$ are not codewords.
Without loss of generality, suppose that
 $v_{2,0}$ is a non-codeword. 
 Then both $v_{\overline{1}, 3}$ and $v_{0,3}$ are codewords 
as
$I(\{v_{0,1}, v_{2, 0}, v_{\overline{1}, 1}, u\}) = I(\{v_{2, 0}, v_{\overline{1}, 1}, u\})$ for all 
$u \in \{v_{\overline{1}, 3}, v_{0,3}\}$. 

If $v_{0, \overline{2}}$ is also a non-codeword, then $v_{\overline{3}, 1}$ and $v_{\overline{3}, 0}$ 
are codewords, similarly. 
In this scenario, 
$ch_1(v_{0,0}) \geq \frac{1}{3} + 1 + \frac{1}{3} = 2 - \frac{1}{3}$,
$ch_1(v_{\overline{1}, 1}) \geq \frac{1}{2} + 1 + \frac{1}{2} = 2$, 
$ch_1(v_{0,1}) \geq \frac{1}{2} + 1 + \frac{1}{3} + \frac{1}{3} = 2 + \frac{1}{6}$, and 
$ch_1(v_{\overline{1},0}) \geq \frac{1}{2} + 1 + \frac{1}{3} + \frac{1}{3} = 2 + \frac{1}{6}$. 
Thus, each of $v_{0,1}$ and $v_{\overline{1}, 0}$ donates at least $\frac{1}{6}$ to $v_{0,0}$ according to (R2). Therefore, $ch_2(v_{0,0}) \geq ch_1(v_{0,0}) + \frac{1}{6} +\frac{1}{6} \geq 2$.

Hence, let us suppose that $v_{0, \overline{2}}$ is a codeword. Moreover, if $v_{\overline{3}, 1}$ is also a codeword, then 
$ch_1(v_{0,0}) \geq \frac{1}{3} + 1 + \frac{1}{2} = 2 - \frac{1}{6}$,
$ch_1(v_{\overline{1}, 1}) \geq \frac{1}{2} + 1 + \frac{1}{2} = 2$, 
$ch_1(v_{0,1}) \geq \frac{1}{2} + 1 + \frac{1}{3} + \frac{1}{3} = 2 + \frac{1}{6}$, and 
$ch_1(v_{\overline{1},0}) \geq \frac{1}{2} + \frac{1}{2} + \frac{1}{2} + \frac{1}{2} = 2$.
Thus,  $v_{0,1}$  donates at least $\frac{1}{6}$ to $v_{0,0}$ according to (R2). Therefore, $ch_2(v_{0,0}) \geq ch_1(v_{0,0}) +\frac{1}{6} \geq 2$.

On the other hand, if $v_{\overline{3}, 1}$ is a non-codeword, then $v_{\overline{1}, \overline{2}}$ is a codeword as $I(\{ v_{\overline{1},0}, v_{\overline{1},\overline{2}}, v_{\overline{3},1}, v_{0,0} \}) =I(\{  v_{\overline{1},\overline{2}}, v_{\overline{3},1}, v_{0,0} \})$. 
In this scenario, 
$ch_1(v_{0,0}) \geq \frac{1}{3} + 1 + \frac{1}{2} = 2 - \frac{1}{6}$,
$ch_1(v_{\overline{1}, 1}) \geq \frac{1}{2} + 1 + \frac{1}{3} = 2- \frac{1}{6}$, 
$ch_1(v_{0,1}) \geq \frac{1}{2} + 1 + \frac{1}{3} + \frac{1}{3} = 2 + \frac{1}{6}$, and 
$ch_1(v_{\overline{1},0}) \geq \frac{1}{2} + 1 + \frac{1}{3} + \frac{1}{3} = 2 + \frac{1}{6}$.
Thus,  each of $v_{0,1}$ and $v_{\overline{1},0}$  donates at least $\frac{1}{12}$ to both $v_{0,0}$
and $v_{\overline{1}, 1}$ according to (R2). Therefore, $ch_2(v_{0,0}) \geq ch_1(v_{0,0}) +\frac{1}{12} + \frac{1}{12} \geq 2$.

Therefore, $ch_2(v_{0,0}) \geq 2$ when $|I(v_{0,0})| = 3$ concluding the proof.

  \medskip

        \noindent\textbf{Case 5: $\mathbf{|I(v_{0,0})| \leq 2}$.}
        Observe that, regardless of how the codeword 
        neighbors (at most $2$) of $v_{0,0}$ are positioned, there 
        will exist three non-codeword vertices on a straight line, contradicting Claim 2(b). Thus, this particular case never occurs. \qed

\section{Conclusion}\label{SecConclusion}

In this paper, we have given exact minimum densities $D^A_{1,2}=D^B_{1,2}=\frac{1}{3}$, $D^A_{1,\ell}=1$ for each $\ell\geq3$, $D^B_{1,3}=\frac{4}{7}$ and $D^B_{1,\ell} = \frac{2}{3}$, for all $\ell \geq 4$ in the infinite triangular grid. Together with known results for usual location-domination, these results cover every case for $(1,\leq \ell)$-LDA and -LDB codes. By~\cite{lehtilathree}, the result $D^B_{1,7} = \frac{2}{3}$ gives the maximum density of open irredundant sets as well as the minimum density of MILD codes in the infinite triangular grids. 

We have presented constructions in each case which obtain the shown minimum densities. Our lower bound proofs are based on the discharging method, as is typical for these types of results. However, interestingly we have managed to implement two generic discharging rules which alone have yielded the exact lower bounds in all three cases. This is atypical since often different lower bound proofs based on the discharging method require varying numbers of different rules.

Additionally, software implementations~\cite{griddy-paper,griddy-repo,adage-repo,adage-paper} exist to find the bounds using discharging and assignments for similar problems. 
To the best of our knowledge, no computer-aided approach has previously been applied to the $(r,\leq \ell)$-LDA or -LDB codes considered here. 
Moreover, none of the proofs in this article are computer-aided.

While we have focused on only the case with $r=1$, the cases with larger $r$ are left for future research. Another interesting related open problem is obtaining the exact minimum density of $(1,\leq2)$-LDB codes in the infinite square grid. The current best lower bound is $\frac{2}{5}$ while the best known construction has density $\frac{5}{11}$. Yet another future research direction is considering LDA/LDB codes in the infinite hexagonal grid. There are currently no articles on the topic beyond the works of usual location-domination~\cite{honkala2006locating} (giving the density for $\ell=1$) and works considering $(1,\leq\ell)$-identifying codes which provide upper bounds for the minimum density of LDA/LDB codes~\cite{honk04c}.

\subsubsection*{Acknowledgments.} The research of Tuomo Lehtilä was in part supported by the Finnish Cultural Foundation and Academy of Finland grants 338797 and 358718.
This work is also partially supported by the IFCAM project “Applications of graph homomorphisms” (MA/IFCAM/18/39).

\subsubsection*{Disclosure of Interests.}
The
authors have no competing interests to declare that are relevant to the content of this article.

    \bibliographystyle{abbrv}
    \bibliography{biblio}

\end{document}